\newtheorem{theorem}{Theorem}[section]
\newtheorem{proposition}[theorem]{Proposition}
\newtheorem{lemma}[theorem]{Lemma}
\newtheorem{corollary}[theorem]{Corollary}
\theoremstyle{definition}
\newtheorem{definition}[theorem]{Definition}
\newtheorem{remark}[theorem]{Remark}
\numberwithin{equation}{section}
\begin{document}

\baselineskip=15pt

\title[Parabolic ${\rm SL}_r$--opers]{Parabolic ${\rm SL}_r$--opers}

\author[I. Biswas]{Indranil Biswas}

\address{School of Mathematics, Tata Institute of Fundamental
Research, Homi Bhabha Road, Mumbai 400005, India}

\email{indranil@math.tifr.res.in}

\author[S. Dumitrescu]{Sorin Dumitrescu}

\address{Universit\'e C\^ote d'Azur, CNRS, LJAD, France}

\email{dumitres@unice.fr}

\author[C. Pauly]{Christian Pauly}

\address{Universit\'e C\^ote d'Azur, CNRS, LJAD, France}

\email{pauly@unice.fr}

\subjclass[2010]{14H60, 33C80, 53C07.}

\keywords{Opers, differential operator, projective structure, holomorphic connection}

\date{}
\begin{abstract}
We define ${\rm SL}_r$--opers in the set-up of vector bundles on curves with a parabolic
structure over a divisor. Basic properties of these objects are investigated.
\end{abstract}

\maketitle

\section{Introduction}

The notion of oper was introduced by Beilinson and Drinfeld \cite{BD} as an essential ingredient in 
the geometric Langlands program; they were influenced by earlier work of Drinfeld and Sokolov 
\cite{DS}, \cite{DS2}. Since then opers have appeared in numerous contexts and by now this notion has been 
established as an important topic.

Let $X$ be a compact connected Riemann surface and $G$ a semisimple
affine algebraic group defined over $\mathbb C$; fix a Borel subgroup $B\, \subset\, G$. A
$G$--oper on $X$ is a holomorphic principal $G$--bundle $E_G$ over $X$, together with
\begin{itemize}
\item a holomorphic reduction of structure group $E_B\, \subset\, E_G$ of $E_G$ to $B\, \subset\, G$,
and

\item a holomorphic connection on $E_G$ with respect to which the reduction $E_B$ satisfies a
certain transversality condition. The transversality condition in question is
described below for the case of $G\,=\,\text{SL}_r({\mathbb C})$.
\end{itemize}
In case of $G\,=\, \text{SL}_r({\mathbb C})$, an $\text{SL}_r({\mathbb C})$--bundle corresponds to
a holomorphic vector bundle $E$ on $X$ of rank $r$ such that $\bigwedge^r E\,=\, {\mathcal O}_X$.
A $B$--reduction of it is given by a complete flag
$$
0\,=\, E_0\, \subset\, E_1\, \subset\, E_2\, \subset\, \cdots\, \subset\, E_{r-1}\, \subset\, E_r \,=\, E
$$
of holomorphic subbundles such that $\text{rank}(E_i)\,=\, i$. A holomorphic connection $D$ on
this filtered bundle defines an $\text{SL}_r({\mathbb C})$--oper if
\begin{enumerate}
\item $D(E_i)\, \subset\, E_{i+1}\otimes K_X$ for all $i$, where $K_X$ is the holomorphic cotangent
bundle of $X$, 

\item the corresponding second fundamental form
$E_i/E_{i-1}\, \longrightarrow\, (E_{i+1}/E_i)\otimes K_X$ is an isomorphism for all
$1\, \leq\, i\, \leq\, r-1$, and

\item $\bigwedge^r D = d$, the standard de Rham differentiation on ${\mathcal O}_X$.
\end{enumerate}
The underlying holomorphic vector bundle of a $\text{SL}_r({\mathbb C})$--oper is unique up to 
tensor product with a finite set of line bundles of order $r$: the underlying bundle of a 
$\text{SL}_2({\mathbb C})$--oper is the unique nontrivial extension $V_0$ of $K^{-1/2}_X$ by 
$K^{1/2}_X$,  for some theta characteristic $K^{1/2}_X$ on $X$. More generally, the underlying bundle 
of a $\text{SL}_r({\mathbb C})$--oper is, up to tensor product with an $r$-torsion line bundle,  the 
symmetric power $\text{Sym}^{r-1}(V_0)$, where $V_0$ is the underlying bundle of
a $\text{SL}_2({\mathbb C})$--oper.

The isomorphism classes of $\text{SL}_2({\mathbb C})$--opers are in bijection with the
projective structures on $X$; see \cite{Gu} for projective structures on $X$ (see also \cite{GKM}).

Fix finitely many distinct points $x_i$ on $X$, and consider the effective divisor $S\,=\, x_1 + 
\cdots + x_m$. A quasiparabolic structure, over $S$, on a holomorphic vector bundle $E$ on $X$ is a 
decreasing filtration of subspaces of each fiber $E_{x_i}$ (this filtration need not be complete). 
A parabolic structure on $E$ is a quasiparabolic structure together with weights for the subspaces 
that are nonnegative real numbers strictly less than $1$. For any (decreasing) filtration, these 
numbers are actually strictly increasing. A vector bundle with a parabolic structure is called a parabolic 
vector bundle \cite{MS}, \cite{MY}.

Our aim here is to introduce and study the ${\rm SL}_r$--opers in the context of parabolic vector 
bundles.

The article is organized in the following way. Section \ref{se2} presents general facts about jet 
bundles and differential operators on Riemann surfaces. The ${\rm SL}_2$--opers in the parabolic 
set-up are defined and studied in Section \ref{se4} . The main result of that section (Theorem 
\ref{thm1}) says that the space of parabolic ${\rm SL}_2$--opers with fixed parabolic 
weights is an affine space for the vector space $H^0(X, K^2_X \otimes {\mathcal O}_X(S))$. The 
starting point is to actually identify the rank two parabolic bundle underlying parabolic ${\rm 
SL}_2$--opers. The underlying parabolic bundle for ${\rm SL}_2$--opers is investigated in Section 
\ref{se3}. Parabolic ${\rm SL}_r$--opers are defined in Section \ref{se5}. Section \ref{se6} 
focuses on the case where the parabolic weights are rational of special type and
$r$ is odd. In this case we show 
that parabolic ${\rm SL}_r$--opers on $X$ are in natural bijection with  invariant   ${\rm SL}_r$--opers on a 
ramified Galois covering $Y$ over $X$ equipped with an action of the Galois group (see Theorem 
\ref{thm2}). This result uses in an essential way the correspondence studied in \cite{Bi},
\cite{Bo1}, \cite{Bo2}, and only 
works for certain rational parabolic weights under the assumption that
$r$ is odd. In this case we deduce a natural  parametrization of the space of parabolic ${\rm SL}_r$--opers on $X$, with given (special rational)  parabolic weights (see Theorem \ref{propl}).   In the last section we discuss alternative 
definitions of ${\rm SL}_r$--opers and related questions.

Finally we would like to add that similar constructions have recently been carried out by Y. 
Wakabayashi (\cite{W}, Theorem A).

\section{Differential operators}\label{se2}

Let $X$ be a compact connected Riemann surface.
The holomorphic cotangent bundle of $X$ will be denoted by $K_X$.
Let $p_i\, :\, X\times X \, \longrightarrow\, X$ be the projection to
the $i$-th factor, where $i\,=\,1,\, 2$. Let
$$
\Delta\,=\, \{(x,\, x)\, \mid\, x\,\in\, X\}\, \subset\, X\times X
$$
be the reduced diagonal divisor. We shall identify $\Delta$ with $X$
using the map $x\, \longmapsto\, (x,\, x)$.

Take a holomorphic vector bundle $V$ on $X$.
For any integer $k\, \geq\, 0$, the $k$-th jet bundle
$J^k(V)$ for $V$ is defined to be the direct image
$$
J^k(V)\, :=\, p_{1*}((p^*_2 V)/((p^*_2 V)\otimes {\mathcal O}_{X\times X}(-(k+1)\Delta)))
\, \longrightarrow\, X\, .
$$
The line bundle ${\mathcal O}_{X\times X}(-\Delta)\vert_\Delta$ is identified with $K_X$ by 
the Poincar\'e adjunction formula; more precisely, for any holomorphic coordinate
function $z$ defined on any open subset $U\, \subset\, X$, the section of
${\mathcal O}_{X\times X}(-\Delta)$ over $\Delta\bigcap (U\times U)$ given by
$z\circ p_2 - z\circ p_1$ coincides with the section $dz$ of $K_X\vert_U$.
This identification between ${\mathcal O}_{X\times X}(-\Delta)\vert_\Delta$
and $K_X$ produces a short exact sequence of sheaves
\begin{equation}\label{z1}
0\, \longrightarrow\, {\mathcal O}_{X\times X}(-(k+2)\Delta)
\, \longrightarrow\, {\mathcal O}_{X\times X}(-(k+1)\Delta)
\, \longrightarrow\, K^{\otimes (k+1)}_X \, \longrightarrow\, 0
\end{equation}
on $X\times X$, where $K^{\otimes (k+1)}_X$ is supported on
$\Delta$. Consider the short exact sequence of sheaves on $X\times X$
$$
0\, \longrightarrow\, ((p^*_2 V)\otimes {\mathcal O}_{X\times X}(-(k+1)\Delta))/
((p^*_2 V)\otimes {\mathcal O}_{X\times X}(-(k+2)\Delta))\, \longrightarrow\,
$$
\begin{equation}\label{di}
(p^*_2 V)/((p^*_2 V)\otimes {\mathcal O}_{X\times X}(-(k+2)\Delta))
\, \longrightarrow\, (p^*_2 V)/((p^*_2 V)\otimes {\mathcal O}_{X\times X}(-(k+1)\Delta))
\, \longrightarrow\, 0\, .
\end{equation}
Let
$$
0\, \longrightarrow\, p_{1*}(((p^*_2 V)\otimes {\mathcal O}_{X\times X}(-(k+1)\Delta))/
((p^*_2 V)\otimes {\mathcal O}_{X\times X}(-(k+2)\Delta)))\, \longrightarrow\,
$$
$$
p_{1*}((p^*_2 V)/((p^*_2 V)\otimes {\mathcal O}_{X\times X}(-(k+2)\Delta)))
\, \longrightarrow\, p_{1*}((p^*_2 V)/((p^*_2 V)\otimes {\mathcal O}_{X\times X}(-(k+1)\Delta)))
\, \longrightarrow\, 0\, .
$$
the direct image of it on $X$ by the map $p_1$; note that the higher direct images vanish because
the supports of the sheaves in the short exact sequence in \eqref{di} are finite over $X$.
Using \eqref{z1}, this exact sequence of direct images becomes the short exact sequence
of vector bundles
\begin{equation}\label{f1}
0\, \longrightarrow\, V\otimes K^{k+1}_X \, \longrightarrow\,
J^{k+1}(V) \, \stackrel{q^k_V}{\longrightarrow}\, J^k(V) \, \longrightarrow\, 0
\end{equation}
on $X$.

For any holomorphic vector bundle $W$ on $X$, any ${\mathcal O}_X$--linear homomorphism
$\delta\, :\, V\, \longrightarrow\, W$ produces a homomorphism
\begin{equation}\label{f2}
\delta^{(k)} \, :\, J^k(V) \, \longrightarrow\, J^k(W)
\end{equation}
for every $k\, \geq\, 0$.

Consider the homomorphism $q^0_V$ in \eqref{f1}. Let
$$
(q^0_V)^{(1)}\,:\, J^1(J^1(V)) \, \longrightarrow\, J^1(V)
$$
be the corresponding homomorphism in \eqref{f2}. On the other hand, we have the
homomorphism
$$
q^0_{J^1(V)}\,:\, J^1(J^1(V)) \, \longrightarrow\, J^1(V)
$$
by setting $k\,=\, 0$ and $J^1(V)$ in place of $V$ in \eqref{f1}. Note that the following
two compositions
$$
J^1(J^1(V)) \, \stackrel{(q^0_V)^{(1)}}{\longrightarrow}\, J^1(V) 
\, \stackrel{q^0_V}{\longrightarrow}\, V
$$
and 
$$
J^1(J^1(V)) \, \stackrel{q^0_{J^1(V)}}{\longrightarrow}\, J^1(V) 
\, \stackrel{q^0_V}{\longrightarrow}\, V
$$
coincide. The kernel of the homomorphism $(q^0_V)^{(1)} - q^0_{J^1(V)}$ coincides with $J^2(V)$. Therefore,
we have the short exact sequence of holomorphic vector bundles
\begin{equation}\label{f3}
0\, \longrightarrow\, J^2(V) \, \stackrel{\mathbf t}{\longrightarrow}\, J^1(J^1(V)) \,
\stackrel{(q^0_V)^{(1)}-q^0_{J^1(V)}}{\longrightarrow}\, V\otimes K_X
\, \longrightarrow\, 0
\end{equation}
on $X$.

The sheaf of {\it holomorphic differential operators} of order $k$ from $V$ to another holomorphic vector 
bundle $W$ is the sheaf of holomorphic sections of the holomorphic vector bundle
\begin{equation}\label{f4}
W\otimes J^k(V)^*\,=\, \text{Hom}(J^k(V), \, W)\, =:\, \text{Diff}^k(V,\, W)\, .
\end{equation}
Using the inclusion $V\otimes K^{k}_X \, \hookrightarrow\, J^{k}(V)$ in \eqref{f1},
we have a surjective homomorphism of vector bundles
\begin{equation}\label{f5}
\text{Diff}^k(V,\, W)\,\longrightarrow\, \text{Hom}(V\otimes K^{k}_X, \, W)\,=\,
\text{Hom}(V, \, W)\otimes T_X^{\otimes k}\, ;
\end{equation}
it is known as the {\it symbol} map.

\section{A rank two parabolic bundle}\label{se3}

Let $X$ be a compact connected Riemann surface. The genus of $X$ will be
denoted by $g$. Fix a finite nonempty subset
\begin{equation}\label{e1}
S\,:=\, \{x_1,\, \cdots,\, x_m\}\, \subset\, X\, ,
\end{equation}
(so $m\, \geq\, 1$), and also fix a function
\begin{equation}\label{e2}
{\mathbf c} \, :\, S\, \longrightarrow\, \{t\, \in\, {\mathbb R}\, \mid\, t\, >\, 1\}\, ;
\end{equation}
for notational convenience, the real number ${\mathbf c}(x_i)$ will also be denoted by $c_i$.

We shall assume that $m\,=\, \# S$ is an even integer. Fix a pair
\begin{equation}\label{e3}
({\mathbb L},\, \varphi_0)\, ,
\end{equation}
where $\mathbb L$ is a holomorphic line bundle on $X$ such that ${\mathbb L}^{^{\otimes 2}}$
is isomorphic to ${\mathcal O}_X(-S)$, and
$$
\varphi_0\, :\, {\mathbb L}^{^{\otimes 2}}\, \longrightarrow\, {\mathcal O}_X(-S)
$$
is a holomorphic isomorphism of line bundles.

Let $(K^{1/2}_X,\, I_X)$ be a theta characteristic on $X$; this means that $K^{1/2}_X$ is a
holomorphic line bundle on $X$ of degree $g-1$, and
$$
I_X\, :\, (K^{1/2}_X)^{\otimes 2}\, \longrightarrow\, K_X
$$
is a holomorphic isomorphism of holomorphic line bundles. We shall 
identify $(K^{1/2}_X)^{\otimes (2i +j)}$ with $K^{\otimes i}_X\otimes (K^{1/2}_X)^{\otimes j}$ 
using $I^{\otimes i}_X\otimes \text{Id}_{(K^{1/2}_X)^{\otimes j}}$.
The line bundle $(K^{1/2}_X)^{\otimes i}$ will be denoted by $K^{i/2}_X$.
For notational convenience, the above isomorphism
$I_X$ will be suppressed, and a theta characteristic will be denoted simply by
$K^{1/2}_X$. The isomorphism $I_X$ will be used without mentioning it.

Using Serre duality, we have
$$
H^1(X,\, \text{Hom}(K^{-1/2}_X\otimes {\mathbb L},\,
K^{1/2}_X\otimes {\mathbb L}))\,=\, H^1(X,\, K_X)\,=\, H^0(X,\, {\mathcal O}_X)^*
\,=\, \mathbb C
$$
(the line bundle $\mathbb L$ is as in \eqref{e3}).
Hence $$1\, \in\, H^1(X,\, \text{Hom}(K^{-1/2}_X\otimes {\mathbb L},\,
K^{1/2}_X\otimes {\mathbb L}))\,=\,\mathbb C$$ corresponds to a nontrivial extension
\begin{equation}\label{e4}
0\, \longrightarrow\, K^{1/2}_X\otimes {\mathbb L}\, \longrightarrow\, E
\, \stackrel{p_{_0}}{\longrightarrow}\, K^{-1/2}_X\otimes {\mathbb L} \, \longrightarrow\,0\, .
\end{equation}
So the short exact sequence in \eqref{e4} does not split holomorphically.

The holomorphic vector bundle $E$ constructed in \eqref{e4} will be equipped with a parabolic
structure; see \cite{MS}, \cite{MY} for parabolic bundles.

The parabolic divisor for the parabolic structure on $E$ is the subset $S$ in \eqref{e1}.
For any $x_i\, \in\, S$, the quasiparabolic filtration of $E_{x_i}$ is
\begin{equation}\label{e5}
0\, \subset\, (K^{1/2}_X\otimes {\mathbb L})_{x_i}\, \subset\, E_{x_i}\, ,
\end{equation}
where $(K^{1/2}_X\otimes {\mathbb L})_{x_i}$ is the fiber of the line subbundle
$K^{1/2}_X\otimes {\mathbb L}\, \hookrightarrow\, E$ in \eqref{e4}. The
parabolic weight of this line $(K^{1/2}_X\otimes {\mathbb L})_{x_i}$ is
$\frac{2c_i-1}{2c_i}$, while the parabolic weight of $E_{x_i}$ is
$\frac{1}{2c_i}$, where $c_i\,:=\, {\mathbf c}(x_i)$ with $\mathbf c$ being the
function in \eqref{e2}; note that $\frac{1}{2c_i}\, <\, \frac{2c_i-1}{2c_i}$. The
parabolic vector bundle thus obtained will be denoted by $E_*$.

The parabolic degree of the above parabolic bundle $E_*$ is:
\begin{equation}\label{e6}
\text{par-deg}(E_*)\,=\, \text{degree}(E)+\sum_{i=1}^m (\frac{2c_i-1}{2c_i}+\frac{1}{2c_i})
\,=\, \text{degree}({\mathbb L}^{\otimes 2}) + m\,=\, 0\, .
\end{equation}

A \textit{logarithmic connection} on a holomorphic vector bundle $F$ singular over $S$ is a
holomorphic differential operator
$$
D\, :\, F\, \longrightarrow\, F\otimes K_X\otimes{\mathcal O}_X(S) 
$$
of order one such that
\begin{equation}\label{li}
D(fs)\,=\, f\cdot D(s)+ s\otimes df\, ,
\end{equation}
where $s$ is any locally defined 
holomorphic section of $F$ and $f$ is any locally defined holomorphic function on $X$. So a
logarithmic connection $D$ on $F$ gives a ${\mathcal O}_X$--linear homomorphism
\begin{equation}\label{lj}
J^1(F)\, \longrightarrow\, F\otimes K_X\otimes{\mathcal O}_X(S)
\end{equation}
(see \eqref{f4}). The Leibniz condition in \eqref{li} implies that the symbol of
$D$ is $$\text{Id}_F\, \in\, H^0(X,\, \text{End}(F)\otimes {\mathcal O}_X(S))\, .$$
Recall that a logarithmic connection on a Riemann surface is flat (its curvature vanishes identically) because
the sheaf of holomorphic two forms on $X$ is the zero sheaf.

Note that for any $x_i\, \in\, S$, the fiber
$(K_X\otimes{\mathcal O}_X(S))_{x_i}$
of $K_X\otimes{\mathcal O}_X(S)$ over $x_i$ is identified with $\mathbb C$ using the Poincar\'e
adjunction formula; more precisely, for any holomorphic coordinate function $z$ defined around $x_i$,
with $z(x_i)\,=\, 0$, the evaluation of the local section $\frac{1}{z}dz$ of $K_X\otimes{\mathcal O}_X(S)$
at the point $x_i$ is independent of the choice of the coordinate function $z$. Consequently,
we have the isomorphism
$$
{\mathbb C}\, \longrightarrow\, (K_X\otimes{\mathcal O}_X(S))_{x_i}\, ,\ \
b\,\longmapsto\, b\cdot \frac{1}{z}dz\Big\vert_{z=x_i}\, .
$$

The composition
$$
F\, \stackrel{D}{\longrightarrow}\, F\otimes K_X\otimes{\mathcal O}_X(S)\,
\longrightarrow\, (F\otimes K_X\otimes{\mathcal O}_X(S))_{x_i}\,=\, F_{x_i}
$$
(recall that $(K_X\otimes{\mathcal O}_X(S))_{x_i}\,=\, \mathbb C$)
is ${\mathcal O}_X$--linear, so it produces an endomorphism of the fiber $F_{x_i}$.
This element of $\text{End}(F_{x_i})$ is called the residue of $D$ at $x_i$, and
it is denoted by $\text{Res}(D,x_i)$ (see \cite{De}).

A \textit{connection} on the parabolic bundle $E_*$ is a logarithmic connection $D$ on
$E$, singular over $S$, such that
\begin{enumerate}
\item for all $1\,\leq\, i\,\leq\, m$, the residue $\text{Res}(D,x_i)\, \in\, 
\text{End}(E_{x_i})$ preserves the line $(K^{1/2}_X \otimes {\mathbb L})_{x_i}$ in \eqref{e5}, 
and $\text{Res}(D,x_i)$ acts on $(K^{1/2}_X\otimes {\mathbb L})_{x_i}$ as multiplication by 
the parabolic weight $\frac{2c_i-1}{2c_i}$, and

\item the endomorphism of the quotient $E_{x_i}/(K^{1/2}_X\otimes {\mathbb L})_{x_i}\,=\,
(K^{-1/2}_X \otimes {\mathbb L})_{x_i}$ (see \eqref{e4}) induced by
$\text{Res}(D,x_i)$ coincides with multiplication by the parabolic weight $\frac{1}{2c_i}$.
\end{enumerate}
(See \cite[Section~2.2]{BL}.)

Note that for a logarithmic connection $D$ on $E$ defining a connection on $E_*$, the trace of
the residue of $D$ at each point of $S$ is $1$.

\begin{remark}
We observe that there exist logarithmic connections $D$ on $E$ with more general residue maps 
$\text{Res}(D,x_i)$. We need the connection $D$ to be compatible with the parabolic structure. 
\end{remark}

A connection $D$ on $E_*$ is called \textit{reducible} if there is a holomorphic line subbundle
$L'\, \subset\, E$ preserved by $D$, meaning
\begin{equation}\label{rc}
D(L')\, \subset\, L'\otimes K_X\otimes{\mathcal O}_X(S)\, .
\end{equation}
A connection on $E_*$ is called \textit{irreducible} if it is not reducible.

\begin{proposition}\label{prop1}
Assume that ${\rm genus}(X)\, =\, g\, \geq\, 1$. Then the following two hold:
\begin{enumerate}
\item The parabolic bundle $E_*$ admits a connection.

\item Any connection on $E_*$ is irreducible.
\end{enumerate}
\end{proposition}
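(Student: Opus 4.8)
The plan is to prove the two assertions by a combination of an obstruction‑theoretic / parabolic degree argument. For part (1), I would first recall the standard fact that a parabolic bundle $E_*$ admits a parabolic connection (a logarithmic connection with residues equal to the parabolic weights on the graded pieces) if and only if each indecomposable parabolic direct summand has parabolic degree zero — this is the parabolic analogue of Atiyah's and Weil's criterion, see \cite{BL}. Since we computed $\mathrm{par\text{-}deg}(E_*)=0$ in \eqref{e6}, it suffices to show that $E_*$ is indecomposable as a parabolic bundle. If $E_*$ decomposed as a direct sum of two parabolic line bundles, then in particular the underlying bundle $E$ would be a direct sum of two line subbundles; but this contradicts the fact that the extension \eqref{e4} is non‑split (a splitting of $E$ as a direct sum of line bundles compatible with the sub‑line‑bundle $K^{1/2}_X\otimes\mathbb L$ would split \eqref{e4}). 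Hence $E_*$ is indecomposable, its parabolic degree is $0$, and therefore it carries a connection. (One should check that the quasiparabolic flag at each $x_i$ is of the right shape for this criterion; here the flag \eqref{e5} is complete, so each summand in a hypothetical splitting would be a parabolic line bundle.)

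For part (2), suppose $D$ is a connection on $E_*$ and $L'\subset E$ is a $D$‑invariant holomorphic line subbundle, i.e.\ \eqref{rc} holds. I would first give $L'$ the induced parabolic structure $L'_*$ coming from $E_*$. Then $D$ restricts to a logarithmic connection on $L'$, and the residue of $D$ at $x_i$ acting on the line $L'_{x_i}$ is the restriction of $\mathrm{Res}(D,x_i)\in\mathrm{End}(E_{x_i})$. The key observation is that the eigenvalues of $\mathrm{Res}(D,x_i)$ are exactly $\frac{2c_i-1}{2c_i}$ (on the sub‑line) and $\frac{1}{2c_i}$ (on the quotient), and the induced parabolic weight of $L'_*$ at $x_i$ equals whichever of these two numbers corresponds to the position of $L'_{x_i}$ relative to the flag \eqref{e5}. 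Consequently the residue of $D|_{L'}$ on $L'_*$ at each $x_i$ equals the parabolic weight of $L'_*$ at $x_i$, so $D|_{L'}$ is a \emph{parabolic} connection on $L'_*$. Since a line bundle with a logarithmic connection whose residues are the prescribed parabolic weights has parabolic degree zero (the residues sum, via $\deg L' + \sum_i(\text{residue eigenvalue}) = 0$, to give $\mathrm{par\text{-}deg}(L'_*)=0$), we conclude $\mathrm{par\text{-}deg}(L'_*)=0$.

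Now I would derive a contradiction from $\mathrm{par\text{-}deg}(L'_*)=0$. There are two cases according to whether the inclusion $L'\hookrightarrow E$ composed with $p_0$ in \eqref{e4} is zero or not. If it is zero, then $L'=K^{1/2}_X\otimes\mathbb L$ as a subbundle, with induced parabolic weights $\frac{2c_i-1}{2c_i}$, so $\mathrm{par\text{-}deg}(L'_*)=\deg(K^{1/2}_X\otimes\mathbb L)+\sum_i\frac{2c_i-1}{2c_i} = (g-1) + \deg\mathbb L + \sum_i\frac{2c_i-1}{2c_i}$; using $\deg\mathbb L = -m/2$ this becomes $g-1+\sum_i(\frac{2c_i-1}{2c_i}-\frac12)>g-1\geq 0$ since each $c_i>1$ forces $\frac{2c_i-1}{2c_i}>\frac12$ — contradiction. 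If the composition $L'\to K^{-1/2}_X\otimes\mathbb L$ is nonzero, it is a nonzero map of line bundles, hence $\deg L' \leq \deg(K^{-1/2}_X\otimes\mathbb L) = -(g-1) + \deg\mathbb L = -(g-1)-m/2$, and the induced parabolic weight of $L'_*$ at $x_i$ is at most $\frac{1}{2c_i}<\frac12$ (it is $\frac{1}{2c_i}$ if $L'_{x_i}$ is transverse to the sub‑line, and $\frac{2c_i-1}{2c_i}$ only if $L'_{x_i}$ equals it, but in that generic‑fiber case the map to the quotient would vanish at $x_i$, which is compatible — one must be slightly careful, but in all cases the induced weight is $\le \frac{2c_i-1}{2c_i}$ and the degree bound still yields $\mathrm{par\text{-}deg}(L'_*) \leq -(g-1) - m/2 + \sum_i\frac{2c_i-1}{2c_i} = -(g-1) + \sum_i(\frac{2c_i-1}{2c_i}-\frac12)$; this is $<0$ exactly when $g\ge 1$ — wait, this needs $g\ge 2$ or a sharper fiberwise count). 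The honest way to close the nonzero‑composition case is to note that if $L'$ maps isomorphically to $K^{-1/2}_X\otimes \mathbb L$ away from finitely many points, then transversality of the extension forces $L'\ne K^{1/2}_X\otimes\mathbb L$, and a direct computation of $\mathrm{par\text{-}deg}(L'_*)$ using the precise vanishing order of $L'\to K^{-1/2}_X\otimes\mathbb L$ at each $x_i$ together with $g\ge 1$ gives strict negativity; this fiberwise bookkeeping is the part I expect to be the main obstacle, and it is exactly where the hypothesis $g\ge 1$ (as opposed to $g\ge 2$) and the choice $c_i>1$ are used. Once both cases give $\mathrm{par\text{-}deg}(L'_*)\neq 0$, the contradiction with $\mathrm{par\text{-}deg}(L'_*)=0$ completes the proof that $D$ is irreducible.
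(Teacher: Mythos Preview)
Your proposal has the right overall strategy for both parts, but there are two genuine gaps.

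For part (1), your claim that ``a splitting of $E$ as a direct sum of line bundles compatible with the sub-line-bundle $K^{1/2}_X\otimes\mathbb L$ would split \eqref{e4}'' is not justified: a decomposition $E=L\oplus M$ need not have either summand equal to $K^{1/2}_X\otimes\mathbb L$. In fact this inference is \emph{false} without the hypothesis $g\geq 1$: as Remark~\ref{rem1} shows, for $g=0$ the extension \eqref{e4} is non-split yet $E\cong\mathbb L\oplus\mathbb L$ is decomposable. The paper closes this by a degree argument: if $E=L\oplus M$ with $\deg M\geq\deg L$, then $\deg M\geq\deg(E)/2=\deg(K^{-1/2}_X\otimes\mathbb L)+(g-1)$, so for $g\geq 2$ one has $\deg M>\deg(K^{-1/2}_X\otimes\mathbb L)$, forcing the composition $M\hookrightarrow E\to K^{-1/2}_X\otimes\mathbb L$ to vanish and hence $M=K^{1/2}_X\otimes\mathbb L$, which splits \eqref{e4}. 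The case $g=1$ requires a separate (short) argument. You must supply this; the non-splitting of \eqref{e4} alone is insufficient.

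For part (2), your setup (that $\mathrm{par\text{-}deg}(L'_*)=0$) matches the paper, but you yourself flag that your second case does not close for $g=1$. The paper avoids your bookkeeping by a cleaner observation: the composition $\Phi:L'\hookrightarrow E\to K^{-1/2}_X\otimes\mathbb L$ is automatically a \emph{parabolic} homomorphism, because whenever the induced weight of $L'_*$ at $x_i$ exceeds $\frac{1}{2c_i}$ one has $L'_{x_i}=(K^{1/2}_X\otimes\mathbb L)_{x_i}$ and so $\Phi(x_i)=0$. Since the target (with induced parabolic structure) has $\mathrm{par\text{-}deg}=1-g-\frac{m}{2}+\sum_i\frac{1}{2c_i}<0=\mathrm{par\text{-}deg}(L'_*)$, a nonzero parabolic map is impossible, hence $\Phi=0$ and $L'=K^{1/2}_X\otimes\mathbb L$. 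This reduces immediately to your first case and works uniformly for all $g\geq 1$.
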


\begin{proof}
We shall first prove (2). To prove (2) by
contradiction, let $D$ be a connection on $E_*$ which is reducible.
Let $L'\, \subset\, E$ be a holomorphic line subbundle such that \eqref{rc} holds. 
Let $L'_*\, \subset\, E_*$ be the parabolic line subbundle given by $L'$ equipped with
the parabolic structure induced from $E_*$. Since \eqref{rc} holds, in particular, $L'_*$
admits a connection, we have
\begin{equation}\label{pdq}
\text{par-deg}(L'_*)\,=\,0
\end{equation}
\cite[p.~598, Lemma 4.2]{BL}, \cite[p.~16, Theorem 3]{Oh}. 

Equip the quotient bundle $K^{-1/2}_X\otimes {\mathbb L}$ in \eqref{e4} with the parabolic 
structure induced by the parabolic structure of $E_*$. For this parabolic line bundle we have
\begin{equation}\label{pda}
\text{par-deg}(K^{-1/2}_X\otimes {\mathbb L})\,=\,
-g+1 - \frac{m}{2} + \sum_{i=1}^m \frac{1}{2c_i} \, <\, 0 \,=\, \text{par-deg}(L'_*)
\end{equation}
(see \eqref{pdq}).

Let $\Phi$ denote the composition
$$
L'\, \hookrightarrow\, E\, \stackrel{p_{_0}}{\longrightarrow}\, K^{-1/2}_X\otimes {\mathbb L}
$$
(see \eqref{e4}). This $\Phi$ is a homomorphism of parabolic line bundles. Indeed, if
the parabolic weight of $L'_*$ at a point $x_i\, \in\, S$ is strictly bigger than the
parabolic weight of $K^{-1/2}_X\otimes {\mathbb L}$ at $x_i$, then
$$
L'_{x_i}\, =\, (K^{1/2}_X\otimes {\mathbb L})_{x_i}\,\subset\, E_{x_i}\, ,
$$
and hence $\Phi(x_i)\,=\, 0$. Since $\Phi$ is homomorphism of parabolic line bundles, from 
\eqref{pda} we conclude that $\Phi\,=\, 0$; note that there is no nonzero parabolic homomorphism
from a parabolic line bundle of higher parabolic degree to a parabolic line bundle of
lower parabolic degree. Consequently, $L'$ coincides with the line subbundle $K^{1/2}_X\otimes
{\mathbb L}\, \subset\, E$ in \eqref{e4}.

On the other hand, the parabolic degree of the parabolic line subbundle
$K^{1/2}_X\otimes {\mathbb L}$ equipped with the induced parabolic structure is
$$
g-1 - \frac{m}{2} + \sum_{i=1}^m \frac{1-2c_i}{2c_i} \, > \, 0 \,=\, \text{par-deg}(L'_*)\, .
$$
Therefore, $L'$ can't coincide with $K^{1/2}_X\otimes {\mathbb L}$.
In view of this contradiction we conclude that the connection $D$ on $E_*$ is irreducible.

To prove (1) in the proposition, it suffices to show that the vector bundle $E$ is 
indecomposable. Indeed, any indecomposable parabolic vector bundle of parabolic degree zero 
admits a connection \cite[p.~599, Proposition 4.1]{BL}, and hence from \eqref{e6} it follows 
that $E_*$ admits a connection if $E$ is indecomposable.

We first assume that $\text{genus}(X)\, =\, g\, >\, 1$.

To prove that $E$ is indecomposable by contradiction, assume that
\begin{equation}\label{e7}
E\,=\, L\oplus M\, ,
\end{equation}
where $L$ and $M$ are holomorphic line bundles on $X$ with $\text{degree}(M)\, \geq\,
\text{degree}(L)$. Therefore, we have
\begin{equation}\label{e9}
\text{degree}(M)\, \geq\, \frac{\text{degree}(L)+\text{degree}(M)}{2}\,=\,
\frac{\text{degree}(E)}{2}\,= \,\text{degree}(K^{-1/2}_X\otimes {\mathbb L})+g-1\, .
\end{equation}
From this it follows that $H^0(X,\, \text{Hom}(M,\, K^{-1/2}_X\otimes {\mathbb L}))\,=\,0$,
because we have $\text{degree}(M)\,>\, \text{degree}(K^{-1/2}_X\otimes {\mathbb L})$. In particular,
the composition
\begin{equation}\label{e8}
M\, \hookrightarrow\, E \, \stackrel{p_{_0}}{\longrightarrow}\, K^{-1/2}_X\otimes {\mathbb L}
\end{equation}
vanishes identically (see \eqref{e4} (for $p_0$) and \eqref{e7}). Consequently, the subbundle 
$M$ of $E$ in \eqref{e7} coincides with the subbundle $K^{1/2}_X\otimes {\mathbb L}$ in 
\eqref{e4}. This implies that the decomposition in \eqref{e7} gives a holomorphic splitting of 
the short exact sequence in \eqref{e4}. But, as noted earlier, the short exact sequence in 
\eqref{e4} does not split holomorphically. So the vector bundle $E$ is indecomposable.

Next assume that $g\,=\,1$. We shall show that $E$ is indecomposable in this case as well. To 
prove this, assume, as before, that we have a holomorphic decomposition as in \eqref{e7}. If the 
composition in \eqref{e8} is the zero homomorphism, then the previous argument shows that $E$ is 
indecomposable. So assume that the composition in \eqref{e8} is not identically zero. Then we have
$$
\text{degree}(M)\, \leq\,\text{degree}(K^{-1/2}_X\otimes {\mathbb L})
\,= \,\text{degree}(K^{-1/2}_X\otimes {\mathbb L})+g-1\, ,
$$
so from \eqref{e9} it follows immediately that $\text{degree}(M)\, =\, 
\text{degree}(K^{-1/2}_X\otimes {\mathbb L})$. This implies that the composition in \eqref{e8} 
is actually an isomorphism. Hence $M$ is a direct summand of the subbundle $K^{1/2}_X\otimes 
{\mathbb L}\, \subset\, E$ in \eqref{e4}, and the short exact sequence in \eqref{e4} splits
holomorphically. Since the short exact sequence in \eqref{e4} does not split
holomorphically, we once again conclude that $E$ is indecomposable.
\end{proof}

\begin{remark}\label{rem1}
Proposition \ref{prop1}(1) is not valid for $g\,=\, 0$. To construct an example, take $m\,=\, 
2$, $c_1\,=\, 3$ and $c_2\,=\, 2$. Any holomorphic vector bundle on ${\mathbb C}{\mathbb P}^1$ 
holomorphically splits into a direct sum of holomorphic line bundles \cite[p.~122, 
Th\'eor\`eme 1.1]{Gr}. Using the fact that the exact sequence in \eqref{e4} does not split
holomorphically, it is straightforward to check that
the vector bundle $E$ is isomorphic to ${\mathbb L}\oplus {\mathbb L}$
with $\text{degree}({\mathbb L})\,=\, \text{degree}(E)/2\,=\, -1$. Indeed, if $E\,=\, L\oplus M$,
with $\text{degree}(M)\, >\, \text{degree}(L)$, then
$$
\text{degree}(K^{1/2}_X\otimes {\mathbb L})\,=\, -2\, <\,
\text{degree}(M)\, \geq\, \text{degree}(K^{-1/2}_X\otimes {\mathbb L})\,=\, 0\, ,
$$
and hence $M$ projects isomorphically to the quotient $K^{-1/2}_X\otimes {\mathbb L}$ in \eqref{e4}, making
$M$ a direct summand of the line subbundle $K^{1/2}_X\otimes {\mathbb L}$, and thus implying that
\eqref{e4} splits holomorphically. Therefore, we have
$E\,=\, {\mathbb L}\oplus {\mathbb L}$.

Take a direct summand ${\mathbb L}'\,=\, {\mathbb L}$ of $E$
such that ${\mathbb L}'_{x_1}\, \subset\, E_{x_1}$ 
coincides with $(K^{1/2}_X\otimes {\mathbb L})_{x_1}$ in \eqref{e4}. Note that the
two subbundles ${\mathbb L}'$ and $K^{1/2}_X\otimes {\mathbb L}$ of $E$ are distinct because
$\text{degree}({\mathbb L}')\, \not=\, \text{degree}(K^{1/2}_X\otimes {\mathbb L})$. Consider
the short exact sequence on $X$
$$
0\,\longrightarrow\, {\mathbb L}'\oplus (K^{1/2}_X\otimes {\mathbb L})\,\longrightarrow\, E
\,\longrightarrow\, Q \,\longrightarrow\, 0\, .
$$
Since $\text{degree}(E) - (\text{degree}({\mathbb L}') + \text{degree}(K^{1/2}_X\otimes
{\mathbb L}))\,=\, 1$, it follows that $Q$ is of degree one and hence it is supported on $x_1$;
note that $x_1$ is contained in the support of $Q$ by the condition on ${\mathbb L}'$
that ${\mathbb L}'_{x_1}$ coincides with $(K^{1/2}_X\otimes {\mathbb L})_{x_1}$.
In particular, the fiber ${\mathbb L}'_{x_2}\, \subset\, E_{x_2}$ does not coincide with
$(K^{1/2}_X\otimes {\mathbb L})_{x_2}$. Therefore,
the parabolic degree of this line subbundle ${\mathbb L}'
\,\subset\, E$, equipped with the induced parabolic structure, is
$$
-1+ \frac{5}{6} + \frac{1}{4}\,=\, \frac{1}{12} \, \not=\, 0\, .
$$

This parabolic line subbundle ${\mathbb L}' \,\subset\, E$ has a parabolic direct summand 
given by the copy of $\mathbb L$ whose fiber over $x_2$ coincides with $(K^{1/2}_X\otimes 
{\mathbb L})_{x_2}$. Hence the parabolic bundle $E_*$ does not admit a connection 
\cite[p.~601, Corollary 5.1]{BL}.
\end{remark}

Henceforth, we shall always assume that $g\, \geq\, 1$.

Let
\begin{equation}\label{eta}
\eta\, :\, K_X\,=\,\text{Hom}(K^{-1/2}_X\otimes {\mathbb L},\,
K^{1/2}_X\otimes {\mathbb L}) \, \longrightarrow\, \text{End}(E)
\end{equation}
be the homomorphism that sends any $w\, \in\, \text{Hom}(K^{-1/2}_X\otimes {\mathbb L},\,
K^{1/2}_X\otimes {\mathbb L})_x$ to the composition
$$
E_x \, \stackrel{p_{_0}(x)}{\longrightarrow} \, (K^{-1/2}_X\otimes {\mathbb L})_x\,
\stackrel{w}{\longrightarrow}\,(K^{1/2}_X\otimes {\mathbb L})_x
\, \hookrightarrow\, E_x
$$
(see \eqref{e4}). It is easy to check that $\eta(K_X)$ is a holomorphic line subbundle of
$\text{End}(E)$. Using the injective map
$$
H^0(X,\, K_X)\, \longrightarrow\, H^0(X,\, \text{End}(E))\, , \ \
\alpha\, \longmapsto\, \eta(\alpha)\, ,
$$
$H^0(X,\, K_X)$ will be considered as a subspace of $H^0(X,\, \text{End}(E))$. This subspace
\begin{equation}\label{sp}
H^0(X,\, K_X)\,\subset\, H^0(X,\, \text{End}(E))
\end{equation}
evidently consists of only nilpotent endomorphisms of $E$.

\begin{lemma}\label{lem0}
Any endomorphism $T$ of the holomorphic vector bundle $E$ is of the form
$$
T\,=\, c\cdot {\rm Id}_E + \alpha\, ,
$$
where $c\, \in\, \mathbb C$ and $\alpha\,\in\, H^0(X,\, K_X)$. Hence any holomorphic 
automorphism of $E$ is of the form $c\cdot {\rm Id}_E + \alpha$ with $c\, \in\, {\mathbb 
C}\setminus \{0\}$ and $\alpha\,\in\, H^0(X,\, K_X)$.
\end{lemma}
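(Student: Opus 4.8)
The plan is to deduce everything from the single structural fact that the extension in \eqref{e4} does not split holomorphically. Write $A\,:=\,K^{1/2}_X\otimes{\mathbb L}$ and $B\,:=\,K^{-1/2}_X\otimes{\mathbb L}$, so that \eqref{e4} becomes the nonsplit short exact sequence $0\to A\to E\stackrel{p_0}{\longrightarrow}B\to 0$, and note that $\text{Hom}(B,\, A)\,=\,K_X$ while $\text{Hom}(A,\, B)\,=\,T_X$. Given $T\,\in\, H^0(X,\,\text{End}(E))$, the first step is to show that the ``strictly raising'' component $\sigma\,:=\,p_0\circ T|_A\,\in\, H^0(X,\,\text{Hom}(A,\,B))\,=\, H^0(X,\,T_X)$ vanishes. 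If $g\,\geq\, 2$ this is automatic because $\text{degree}(T_X)\,<\,0$. If $g\,=\,1$, a nonzero $\sigma$ would be a nowhere vanishing section of the degree zero line bundle $T_X$, hence an isomorphism $A\,\stackrel{\sim}{\longrightarrow}\, B$; then $T|_A\circ\sigma^{-1}\,:\,B\to E$ would satisfy $p_0\circ(T|_A\circ\sigma^{-1})\,=\,{\rm Id}_B$, i.e. it would split \eqref{e4}, a contradiction. Thus $T(A)\,\subseteq\, A$ in every case.

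Since $T$ now preserves the line subbundle $A$, it induces $\lambda\,:=\,T|_A\,\in\, H^0(X,\,\text{End}(A))\,=\, H^0(X,\,{\mathcal O}_X)\,=\,{\mathbb C}$ and $\mu\,:=\,\overline{T}\,\in\, H^0(X,\,\text{End}(B))\,=\,{\mathbb C}$ on the quotient $B$. The endomorphism $T-\mu\cdot{\rm Id}_E$ still lies in $H^0(X,\,\text{End}(E))$, it preserves $A$, acts as $0$ on $B$ and as $(\lambda-\mu)\,{\rm Id}_A$ on $A$; in particular it sends $E$ into $A$. Were $\lambda\,\neq\,\mu$, then $(\lambda-\mu)^{-1}(T-\mu\cdot{\rm Id}_E)\,:\,E\to A$ would restrict to ${\rm Id}_A$ on $A$, giving a holomorphic retraction of $A\hookrightarrow E$ and hence a splitting of \eqref{e4}, again a contradiction. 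Therefore $\lambda\,=\,\mu\,=:\,c$, and $N\,:=\,T-c\cdot{\rm Id}_E$ both annihilates $A$ and takes values in $A$. Consequently $N$ factors as $E\stackrel{p_0}{\longrightarrow}B\stackrel{n}{\longrightarrow}A\hookrightarrow E$ for a unique $n\,\in\, H^0(X,\,\text{Hom}(B,\,A))\,=\, H^0(X,\,K_X)$. Comparing with the definition \eqref{eta}, this says exactly $N\,=\,\eta(n)$, so under the identification \eqref{sp} we get $T\,=\,c\cdot{\rm Id}_E+\alpha$ with $\alpha\,:=\,n\,\in\, H^0(X,\,K_X)$; this proves the first assertion.

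For the automorphism statement, I would observe that every element of the subspace \eqref{sp} is nilpotent: indeed $\eta(n)^2\,=\,0$ since the composite $A\hookrightarrow E\stackrel{p_0}{\longrightarrow}B$ is zero. Hence $T\,=\,c\cdot{\rm Id}_E+\alpha$ is invertible if and only if $c\,\neq\,0$, while conversely every $c\cdot{\rm Id}_E+\alpha$ with $c\,\neq\,0$ is an automorphism (being a nonzero scalar multiple of the identity plus a nilpotent). Combined with the first assertion, this gives the stated description of $\text{Aut}(E)$.

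The only point needing genuine care is the vanishing of the raising component $\sigma$ when $g\,=\,1$: there the naive degree count is unavailable, since $H^0(X,\,T_X)\,=\,H^0(X,\,{\mathcal O}_X)\,=\,{\mathbb C}$ is nonzero (this reflects the fact that for $g\,=\,1$ the bundle $E$ is a twist of the Atiyah bundle), and one must instead invoke the non-splitting of \eqref{e4} directly, as above. All the remaining steps are routine applications of that same non-splitting together with the equality $H^0(X,\,{\mathcal O}_X)\,=\,{\mathbb C}$.
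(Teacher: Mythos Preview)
Your argument is correct and is a genuinely different route from the paper's. The paper first invokes the indecomposability of $E$ (established in the proof of Proposition~\ref{prop1}) and then quotes Atiyah's structure theorem \cite[p.~201, Proposition~15]{At2} to write $T\,=\,c\cdot{\rm Id}_E+\alpha'$ with $\alpha'$ nilpotent; it then analyzes the kernel line bundle $F$ of $\alpha'$ via a degree count to force $F\,=\,K^{1/2}_X\otimes{\mathbb L}$. You instead bypass both the indecomposability input and the Atiyah reference, working directly with the ``matrix entries'' of $T$ relative to the filtration $A\subset E$ and using the non-splitting of \eqref{e4} twice (once, in genus $1$, to kill the raising component $\sigma$; once to force $\lambda=\mu$). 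Your approach is more elementary and entirely self-contained, at the cost of splitting the vanishing of $\sigma$ into a case analysis on the genus; the paper's approach is shorter once Atiyah's result is granted and treats all genera uniformly, but relies on an external structural theorem about endomorphism rings of indecomposable bundles.
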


\begin{proof}
In the proof of Proposition \ref{prop1}(2) we saw that
the vector bundle $E$ is indecomposable. Therefore,
any element $T\, \in\, H^0(X,\, \text{End}(E))$ is of the form
$c\cdot {\rm Id}_E + \alpha'$, where $c\, \in\, \mathbb C$ and $\alpha'$ is
nilpotent \cite[p.~201, Proposition 15]{At2} (see also \cite{At1}). Take a nonzero
nilpotent endomorphism $$\alpha'\ \in\, H^0(X,\, \text{End}(E))\, ,$$
so $\alpha'\circ\alpha'\,=\, 0$. Let $F\, \subset\, E$
be the holomorphic line subbundle generated by $\text{kernel}(\alpha')$; so $F$ is the
inverse image, in $E$, of the torsion part of $E/\text{kernel}(\alpha')$. We note that
the image of $\alpha'$ also generates $F$. To prove the lemma it suffices to show that
the composition
\begin{equation}\label{leq1}
F \, \hookrightarrow\, E \,\stackrel{p_{_0}}{\longrightarrow} \,
K^{-1/2}_X\otimes {\mathbb L}
\end{equation}
vanishes identically, where $p_0$ is the projection in \eqref{e4}.

To prove by contradiction that the composition in \eqref{leq1} vanishes identically,
assume that the composition
in \eqref{leq1} does not vanish identically. Then we have the short exact sequence of sheaves
\begin{equation}\label{leq2}
0\,\longrightarrow\, F\oplus K^{1/2}_X\otimes {\mathbb L}\,\longrightarrow\, E
\,\longrightarrow\, Q \,\longrightarrow\, 0
\end{equation}
on $X$ which is constructed using the inclusions of $F$ and $K^{1/2}_X\otimes
{\mathbb L}$ in $E$. Note that $Q$ in \eqref{leq2} is a torsion sheaf. So we have
\begin{equation}\label{le3}
\text{degree}(F) + \text{degree}( K^{1/2}_X\otimes {\mathbb L})+\text{degree}(Q)\,=\,
\text{degree}(E)\,=\, -m\, .
\end{equation}
Since $F$ contains a nonzero quotient of $E$, namely $\alpha'(E)$, as a subsheaf, it
can be deduced that
\begin{equation}\label{de}
\text{degree}(F)\, \geq \, \text{degree}(K^{-1/2}_X\otimes {\mathbb L})\, ;
\end{equation}
indeed, if $\text{degree}(F)\, < \, \text{degree}(K^{-1/2}_X\otimes {\mathbb L})$, then
none of the two line bundles $K^{1/2}_X\otimes {\mathbb L}$ and $K^{-1/2}_X\otimes {\mathbb L}$
has a nonzero homomorphism to $F$, in which case $F$ can't contain a nonzero quotient of $E$.

{}From \eqref{de} we have
$$
\text{degree}(F) + \text{degree}( K^{1/2}_X\otimes {\mathbb L})\, \geq\,
\text{degree}(K^{-1/2}_X\otimes {\mathbb L}) + \text{degree}(K^{1/2}_X\otimes {\mathbb L})\,
=\, \text{degree}(E)\, .
$$
Hence from \eqref{le3} it now follows that $\text{degree}(Q)\,=\,0$. So we have $Q\,=\, 0$,
because $Q$ is a torsion sheaf of degree zero. This 
implies that $F$ gives a holomorphic splitting of the short exact sequence in \eqref{e4}.

Since \eqref{e4} does not split holomorphically, we conclude that the composition in
\eqref{leq1} vanishes identically. As noted before, the lemma follows from this.
\end{proof}

\begin{corollary}\label{cor2}
Any endomorphism of the holomorphic vector bundle $E$ preserves the parabolic structure of $E_*$.
\end{corollary}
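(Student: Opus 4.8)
The plan is to reduce the statement immediately to Lemma \ref{lem0}. To preserve the parabolic structure of $E_*$, an endomorphism $T$ of $E$ must be such that, for every $x_i\,\in\, S$, the induced endomorphism $T(x_i)$ of the fiber $E_{x_i}$ maps the quasiparabolic line $(K^{1/2}_X\otimes {\mathbb L})_{x_i}$ of \eqref{e5} into itself; the other filtration step is the whole fiber $E_{x_i}$, which is trivially preserved. So the only thing to check is this one stability condition at each of the finitely many points of $S$.

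First I would invoke Lemma \ref{lem0} to write $T\,=\, c\cdot {\rm Id}_E + \alpha$ with $c\,\in\, {\mathbb C}$ and $\alpha\,\in\, H^0(X,\, K_X)$, where $H^0(X,\, K_X)$ is viewed inside $H^0(X,\, \text{End}(E))$ via the homomorphism $\eta$ of \eqref{eta}. The summand $c\cdot {\rm Id}_E$ preserves every subspace of every fiber, so it suffices to treat $\alpha\,=\,\eta(\alpha')$ for some $\alpha'\,\in\, H^0(X,\, K_X)$. Here I would use the explicit description of $\eta$: at any point $x$, the endomorphism $\eta(\alpha')(x)$ factors as
$$
E_x \, \stackrel{p_{_0}(x)}{\longrightarrow} \, (K^{-1/2}_X\otimes {\mathbb L})_x\,
\stackrel{\alpha'(x)}{\longrightarrow}\,(K^{1/2}_X\otimes {\mathbb L})_x
\, \hookrightarrow\, E_x\, .
$$
Consequently the image of $\eta(\alpha')(x_i)$ lies in $(K^{1/2}_X\otimes {\mathbb L})_{x_i}$ for every $i$; in particular $\eta(\alpha')(x_i)$ sends the line $(K^{1/2}_X\otimes {\mathbb L})_{x_i}$ into itself. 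Hence $T(x_i)$ preserves the quasiparabolic filtration at each $x_i$, and the corollary follows.

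There is no real obstacle here: all the substance has already been placed in Lemma \ref{lem0} and in the construction of the line subbundle $\eta(K_X)\,\subset\,\text{End}(E)$, whose sections have image contained in the quasiparabolic line $K^{1/2}_X\otimes {\mathbb L}$. The argument is thus a short bookkeeping step rather than a new computation, and I would keep the write-up to a few lines.
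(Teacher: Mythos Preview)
Your proposal is correct and follows the same approach as the paper: both deduce the corollary from Lemma~\ref{lem0}. The paper's proof is simply the one-line remark ``This follows immediately from Lemma~\ref{lem0},'' whereas you have spelled out explicitly why the decomposition $T = c\cdot\mathrm{Id}_E + \eta(\alpha')$ forces the quasiparabolic line $(K^{1/2}_X\otimes\mathbb{L})_{x_i}$ to be preserved.
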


\begin{proof}
This follows immediately from Lemma \ref{lem0}.
\end{proof}

Let
$$
D\, :\, E\, \longrightarrow\, E\otimes K_X\otimes {\mathcal O}_X(S)
$$
be a connection on $E_*$. Consider the line subbundle $K^{1/2}_X\otimes {\mathbb L}$
of $E$ in \eqref{e4}. The composition homomorphism
\begin{equation}\label{ch}
K^{1/2}_X\otimes {\mathbb L}\, \hookrightarrow\, E\,\stackrel{D}{\longrightarrow}\,
E\otimes (K_X\otimes {\mathcal O}_X(S)) \,\stackrel{p_{_0}\otimes{\rm Id}}{\longrightarrow}\,
K^{-1/2}_X\otimes {\mathbb L}\otimes (K_X\otimes {\mathcal O}_X(S))
\,=\, K^{1/2}_X\otimes{\mathbb L}^{-1}\, ,
\end{equation}
where $p_0$ is the projection in \eqref{e4}, is known as the second fundamental form of the line
subbundle $K^{1/2}_X\otimes {\mathbb L}$ for the logarithmic connection $D$; here
${\mathbb L}\otimes {\mathcal O}_X(S)$ is identified with ${\mathbb L}^{-1}$ using
$\varphi_0$ in \eqref{e3}. From \eqref{li}
it follows immediately that the composition homomorphism
in \eqref{ch} is ${\mathcal O}_X$--linear. Let
\begin{equation}\label{ch2}
{\mathbb S}_D(K^{1/2}_X\otimes {\mathbb L})\, \in\, H^0(X,\,{\rm Hom}(K^{1/2}_X\otimes {\mathbb L},
\, K^{1/2}_X\otimes {\mathbb L}^{-1}))\,=\,
H^0(X,\,{\mathbb L}^{-2})\,=\,H^0(X,\,{\mathcal O}_X(S))
\end{equation}
be the second fundamental form of $K^{1/2}_X\otimes {\mathbb L}$ for the logarithmic connection $D$.

We recall from the definition of a connection on $E_*$ that the residue of $D$ at
any $x_i\, \in\, S$ preserves the line 
$(K^{1/2}_X\otimes {\mathbb L})_{x_i}\, \subset\, E_{x_i}$. From this it follows immediately
that the section ${\mathbb S}_D(K^{1/2}_X\otimes {\mathbb L})$ in \eqref{ch2} vanishes at all
$x_i\,\in\, S$. Consequently, we have
\begin{equation}\label{e10}
{\mathbb S}_D(K^{1/2}_X\otimes {\mathbb L})\, \in\, H^0(X,\,{\mathcal O}_X)
\, \subset\, H^0(X,\,{\mathcal O}_X(S))\, .
\end{equation}

\begin{lemma}\label{lem1}
The holomorphic function ${\mathbb S}_D(K^{1/2}_X\otimes {\mathbb L})$ in \eqref{e10}
does not vanish identically.
\end{lemma}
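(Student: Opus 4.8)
The plan is to argue by contradiction: suppose ${\mathbb S}_D(K^{1/2}_X\otimes {\mathbb L})$ vanishes identically. By the very definition of the second fundamental form in \eqref{ch}, this means that the composition $K^{1/2}_X\otimes {\mathbb L}\hookrightarrow E\stackrel{D}{\longrightarrow} E\otimes K_X\otimes{\mathcal O}_X(S)\stackrel{p_0\otimes{\rm Id}}{\longrightarrow}(K^{-1/2}_X\otimes{\mathbb L})\otimes K_X\otimes{\mathcal O}_X(S)$ is zero, which is precisely the statement that $D(K^{1/2}_X\otimes{\mathbb L})\subset (K^{1/2}_X\otimes{\mathbb L})\otimes K_X\otimes{\mathcal O}_X(S)$, i.e.\ the line subbundle $K^{1/2}_X\otimes{\mathbb L}$ of $E$ in \eqref{e4} is preserved by $D$. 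In other words $D$ would be a reducible connection on $E_*$ in the sense of \eqref{rc}.

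But Proposition \ref{prop1}(2) asserts that, under the standing assumption $g\geq 1$ (in force since the paragraph following Remark \ref{rem1}), every connection on $E_*$ is irreducible. This immediately contradicts the reducibility just derived, so ${\mathbb S}_D(K^{1/2}_X\otimes{\mathbb L})$ cannot vanish identically. This is the entire argument; it is short precisely because the work has already been done in Proposition \ref{prop1}.

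The only point that requires a moment's care — and the step I would expect to be the main (though minor) obstacle — is the equivalence between "the section in \eqref{ch2} is identically zero" and "the subbundle $K^{1/2}_X\otimes{\mathbb L}$ is $D$-invariant as in \eqref{rc}." One direction is the defining property of the second fundamental form; the other uses that $p_0$ in \eqref{e4} has kernel exactly $K^{1/2}_X\otimes{\mathbb L}$, so that $(p_0\otimes{\rm Id})\circ D$ restricted to $K^{1/2}_X\otimes{\mathbb L}$ vanishes if and only if $D$ maps $K^{1/2}_X\otimes{\mathbb L}$ into $(K^{1/2}_X\otimes{\mathbb L})\otimes K_X\otimes{\mathcal O}_X(S)$. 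Once this identification is recorded, invoking Proposition \ref{prop1}(2) with $L'=K^{1/2}_X\otimes{\mathbb L}$ finishes the proof.
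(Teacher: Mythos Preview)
Your proof is correct. Both your argument and the paper's begin the same way: vanishing of the second fundamental form is equivalent to $D$ preserving the line subbundle $K^{1/2}_X\otimes{\mathbb L}$. From there the paper does not invoke Proposition~\ref{prop1}(2); instead it argues directly via the residue formula \cite[Theorem 3]{Oh}: if $D'$ is the induced logarithmic connection on $K^{1/2}_X\otimes{\mathbb L}$, then $\mathrm{degree}(K^{1/2}_X\otimes{\mathbb L})+\sum_i\mathrm{Res}(D',x_i)=0$, whereas with $\mathrm{degree}(K^{1/2}_X\otimes{\mathbb L})=g-1-\tfrac{m}{2}$ and $\mathrm{Res}(D',x_i)=\tfrac{2c_i-1}{2c_i}$ the left side is strictly positive. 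Your route packages this same numerical obstruction inside the irreducibility statement of Proposition~\ref{prop1}(2) (whose proof already contains exactly this computation for the subbundle $K^{1/2}_X\otimes{\mathbb L}$), so the two arguments are logically very close. Your version is shorter and avoids repetition; the paper's version is self-contained at the level of the lemma and makes the role of the residue formula explicit.
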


\begin{proof}
Assume that ${\mathbb S}_D(K^{1/2}_X\otimes {\mathbb L})\,=\, 0$. This implies that the 
connection $D$ preserves the line subbundle $K^{1/2}_X\otimes {\mathbb L}$ inducing a 
logarithmic connection on it. Let $D'$ be the logarithmic connection on $K^{1/2}_X\otimes 
{\mathbb L}$ induced by $D$. We recall the general formula relating the residue of a logarithmic 
connection with the degree of the vector bundle:
\begin{equation}\label{a1}
\text{degree}(K^{1/2}_X\otimes {\mathbb L})+ \sum_{i=1}^m\text{trace}(\text{Res}(D', x_i))\,=\, 0\, ,
\end{equation}
where $\text{Res}(D', x_i)$ is the residue of the logarithmic connection
$D'$ at $x_i$ \cite[p.~16, Theorem 3]{Oh}. Since
$\text{degree}(K^{1/2}_X\otimes {\mathbb L})\,=\, g-1-\frac{m}{2}$ and
$\text{Res}(D', x_i)\,=\, \frac{2c_i-1}{2c_i}$, it follows that
$$
\text{degree}(K^{1/2}_X\otimes {\mathbb L})+ \sum_{i=1}^m\text{trace}(\text{Res}(D', x_i))\,>\, 0\, ;
$$
recall that $g\, \geq\,1$ and $m\, \geq\, 1$. Since this contradicts \eqref{a1},
we conclude that ${\mathbb S}_D(K^{1/2}_X\otimes {\mathbb L})\,\not=\, 0$.
\end{proof}

The following is an immediate consequence of Lemma \ref{lem1}.

\begin{corollary}\label{cor1}
Let $D$ be a connection on $E_*$. The section 
${\mathbb S}_D(K^{1/2}_X\otimes {\mathbb L})$ in \eqref{e10}
is given by a nonzero constant function on $X$. In other words, ${\mathbb S}_D(K^{1/2}_X\otimes
{\mathbb L})$, considered as a section of ${\mathcal O}_X$, does not vanish anywhere on $X$, and
${\mathbb S}_D(K^{1/2}_X\otimes {\mathbb L})$, considered as a section of ${\mathcal O}_X(S)$,
vanishes exactly on $S$.
\end{corollary}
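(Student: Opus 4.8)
The plan is to combine Lemma \ref{lem1} with the elementary fact that every global holomorphic function on a compact connected Riemann surface is constant. First I would recall that by \eqref{e10} the second fundamental form ${\mathbb S}_D(K^{1/2}_X\otimes {\mathbb L})$ lies in $H^0(X,\, {\mathcal O}_X)$; since $X$ is compact and connected, $H^0(X,\, {\mathcal O}_X)\,=\, {\mathbb C}$, so this section is a constant function on $X$. Lemma \ref{lem1} asserts that it is not identically zero, hence it is a nonzero constant, and a nonzero constant function on $X$ vanishes nowhere.

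For the second assertion I would use the canonical inclusion of sheaves ${\mathcal O}_X\, \hookrightarrow\, {\mathcal O}_X(S)$, which is given by tensoring with the tautological holomorphic section of ${\mathcal O}_X(S)$ whose divisor is $S$ (note that $S$ is reduced). Under this map a nowhere-vanishing section of ${\mathcal O}_X$ goes to a section of ${\mathcal O}_X(S)$ whose zero divisor is exactly $S$. Since \eqref{e10} realizes ${\mathbb S}_D(K^{1/2}_X\otimes {\mathbb L})$ as a section of ${\mathcal O}_X(S)$ precisely through this inclusion, the stated vanishing locus follows.

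There is essentially no obstacle here: the substantive input is Lemma \ref{lem1}, and what remains are the standard identification $H^0(X,\, {\mathcal O}_X)\,=\, {\mathbb C}$ and the description of how the inclusion ${\mathcal O}_X\, \hookrightarrow\, {\mathcal O}_X(S)$ acts on sections. The only point requiring a little care is that the two vanishing statements in the corollary concern the same section viewed in two different line bundles, linked precisely by the inclusion used in \eqref{e10}.
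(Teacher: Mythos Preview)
Your argument is correct and is exactly what the paper has in mind: it states this corollary as an immediate consequence of Lemma~\ref{lem1}, and the details you supply (that $H^0(X,{\mathcal O}_X)=\mathbb{C}$ for a compact connected Riemann surface, together with the inclusion ${\mathcal O}_X\hookrightarrow{\mathcal O}_X(S)$ used in \eqref{e10}) are precisely the routine facts that make it immediate.
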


\section{Parabolic ${\rm SL}_2$--opers}\label{se4}

Recall that a logarithmic connection on $E$ induces a logarithmic connection on $\bigwedge^2 E$.
Note also  that $\det E \,=\, \bigwedge^2 E\,=\, {\mathbb L}^{^{\otimes 2}}\,=\, {\mathcal O}_X(-S)$.
The de Rham differential $f\, \longmapsto\, df$ produces a logarithmic connection on
${\mathcal O}_X(-S)$. The residue of this logarithmic connection on ${\mathcal O}_X(-S)$
at every point $x_i\, \in\, S$ is $1$.

Two connections on $E_*$ are called \textit{equivalent} is they are conjugate by a holomorphic 
automorphism of the parabolic vector bundle $E_*$. From Corollary \ref{cor2} we know that any 
holomorphic automorphism of $E$ is an automorphism of the parabolic vector bundle $E_*$. 
Consequently, two connections on $E_*$ are equivalent if they are conjugate by a holomorphic 
automorphism of $E$. Note that if a connection $D$ on $E_*$ has the property that the 
logarithmic connection on $\bigwedge^2 E$ induced by $D$ coincides with the tautological 
logarithmic connection on ${\mathcal O}_X(-S)$ given by the de Rham differential, then any 
connection on $E_*$ equivalent to $D$ also has this property. Indeed, if $D'$ and $D''$ are 
two equivalent connections on $E_*$ differing by a holomorphic automorphism $T$ of $E$, then 
the two logarithmic connections on $\det E \,=\,{\mathcal O}_X(-S)$ induced by $D'$ and $D''$ 
differ by the automorphism of $\det E$ induced by $T$. On the other hand, the automorphisms of 
the holomorphic line bundle $\det E$ are constant scalar multiplications, and a constant 
scalar multiplication preserves any logarithmic connection.

\begin{definition}\label{def1}
A {\it parabolic} ${\rm SL}_2$--oper is an equivalence class of
connections $D$ on $E_*$ such that the
logarithmic connection on $\bigwedge^2 E$ induced by $D$ coincides with the tautological
logarithmic connection on $\det E \,=\, {\mathcal O}_X(-S)$ given by the de Rham differential.
\end{definition}

\begin{remark}
Note that our definition of ${\rm SL}_2$--oper (in the case $S =
\emptyset$) is slightly more restrictive than the original one (see \cite{BD} section 2.8), as we choose a theta-characteristic $K^{1/2}_X$, which completely determines the underlying vector bundle.
\end{remark}

\begin{lemma}\label{lem2}
The space of parabolic ${\rm SL}_2$--opers on $X$ is nonempty.
\end{lemma}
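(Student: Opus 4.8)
The plan is to establish Lemma~\ref{lem2} by producing, starting from an \emph{arbitrary} connection on $E_*$ (which exists by Proposition~\ref{prop1}(1) since $g\geq 1$), a genuine parabolic ${\rm SL}_2$--oper, i.e.\ a connection whose induced logarithmic connection on $\bigwedge^2 E$ is the de Rham differential on ${\mathcal O}_X(-S)$. First I would fix a connection $D$ on $E_*$ and consider the induced logarithmic connection $\bigwedge^2 D$ on $\bigwedge^2 E = {\mathcal O}_X(-S)$. Both $\bigwedge^2 D$ and the tautological de Rham connection $d_0$ on ${\mathcal O}_X(-S)$ are logarithmic connections on the same line bundle, with the same residue $1$ at each $x_i \in S$ (recall that the trace of $\text{Res}(D,x_i)$ is $1$ by the definition of a connection on $E_*$, hence the residue of $\bigwedge^2 D$ at $x_i$ is $1$, matching that of $d_0$). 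Therefore their difference $\bigwedge^2 D - d_0$ is ${\mathcal O}_X$--linear with values in $K_X \otimes {\mathcal O}_X(S)$ and has vanishing residue at every point of $S$, so it lies in $H^0(X, K_X) \subset H^0(X, K_X \otimes {\mathcal O}_X(S))$; call it $\omega$.

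The next step is to correct $D$ by a scalar-type modification so as to kill $\omega$. For any $\omega \in H^0(X, K_X)$, the operator $D - \tfrac12 \omega \cdot \text{Id}_E$ is again a logarithmic connection on $E$: indeed adding an ${\mathcal O}_X$--linear $\text{End}(E)$--valued one-form (with log poles on $S$, here even holomorphic) to a logarithmic connection yields a logarithmic connection with the same symbol. I must check two things: that $D - \tfrac12 \omega\cdot\text{Id}_E$ still defines a connection on the \emph{parabolic} bundle $E_*$, and that its induced connection on $\bigwedge^2 E$ is now $d_0$. For the first point, the residue changes by $-\tfrac12\omega(x_i)\cdot\text{Id}_{E_{x_i}}$; but $\omega \in H^0(X,K_X)$ sits inside $H^0(X,\text{End}(E))$ via the map $\eta$ of \eqref{eta}, which takes values in \emph{nilpotent} endomorphisms vanishing on $S$ — wait, more simply: since $\omega$ vanishes at each $x_i \in S$ (it lies in $H^0(X,K_X)$ viewed inside the sections with poles on $S$, and such a section evaluated at $x_i$ in the fibre $(K_X \otimes {\mathcal O}_X(S))_{x_i}$ is zero), the residue is unchanged, so the conditions (1)--(2) in the definition of a connection on $E_*$ still hold. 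For the second point, the induced connection on $\bigwedge^2 E$ is modified by the trace: $\bigwedge^2(D - \tfrac12\omega\cdot\text{Id}_E) = \bigwedge^2 D - \text{trace}(\tfrac12\omega\cdot\text{Id}_E) = \bigwedge^2 D - \omega = d_0$, exactly as desired.

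The main (and essentially only) subtlety is bookkeeping about residues and the identification of $\bigwedge^2 D - d_0$ as a \emph{holomorphic} one-form rather than merely a logarithmic one: this is where the matching of residues at $S$ is used, and it hinges on the trace-one property of $\text{Res}(D,x_i)$ recorded just after the definition of a connection on $E_*$. Once that is in place, the rest is the two routine verifications above. Concretely I would organize the proof as: (i) invoke Proposition~\ref{prop1}(1) for existence of some connection $D$ on $E_*$; (ii) show $\bigwedge^2 D - d_0 =: \omega \in H^0(X,K_X)$ using equality of residues; (iii) replace $D$ by $D - \tfrac12\omega\cdot\text{Id}_E$ and check it is still a connection on $E_*$ (residues unchanged because $\omega|_S = 0$) whose determinant connection equals $d_0$; (iv) conclude that its equivalence class is a parabolic ${\rm SL}_2$--oper, so the space is nonempty.
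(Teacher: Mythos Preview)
Your proposal is correct and follows essentially the same argument as the paper: start from any connection $D$ on $E_*$ (Proposition~\ref{prop1}(1)), note that the induced connection on $\bigwedge^2 E$ differs from the de Rham connection by some $\omega\in H^0(X,K_X)$ because their residues at each $x_i\in S$ agree, and then replace $D$ by $D-\tfrac12\omega\cdot{\rm Id}_E$. The paper omits your explicit verification that the residues (hence the parabolic conditions) are unchanged, but this is indeed automatic since a holomorphic one-form contributes zero residue; the brief detour through $\eta$ is unnecessary, as you yourself note.
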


\begin{proof}
Let $D$ be a connection on $E_*$, which exists by Proposition \ref{prop1}(1).
Let $D'$ be the logarithmic connection on 
$\bigwedge^2 E\,=\, {\mathcal O}_X(-S)$ induced by $D$. For any $x_i\, \in\, S$,
since the eigenvalues of ${\rm Res}(D, x_i)$ are $\frac{2c_i-1}{2c_i}$ and
$\frac{1}{2c_i}$, we conclude that ${\rm Res}(D', x_i)\,=\, \text{trace}({\rm Res}(D, x_i))\,=\, 1$.
In particular, ${\rm Res}(D', x_i)$ coincides with the residue at $x_i$ of the tautological
logarithmic connection $D_0$ on ${\mathcal O}_X(-S)$ given by the de Rham differential. So we have
$$
D'\, =\, D_0 +\beta\, ,$$
where $\beta\,\in\, H^0(X, \, K_X)$. Now it is straight-forward to check
that the logarithmic connection $D- \frac{\beta}{2}$ on $E$ defines a
parabolic ${\rm SL}_2$--oper.
\end{proof}

\begin{lemma}\label{lem3}
Let $D$ be a connection on $E_*$ defining a parabolic ${\rm SL}_2$--oper on $X$. Then $D$ 
produces a holomorphic isomorphism of $E$ with the jet bundle $J^1(K^{-1/2}_X\otimes {\mathbb 
L})$. This isomorphism takes the subbundle $K^{1/2}_X\otimes {\mathbb L}\, \subset\, E$ in 
\eqref{e4} to the subbundle $$(K^{-1/2}_X\otimes {\mathbb L})\otimes K_X\,=\, K^{1/2}_X\otimes 
{\mathbb L}\, \subset\, J^1(K^{-1/2}_X\otimes {\mathbb L})$$ in \eqref{f1}.
\end{lemma}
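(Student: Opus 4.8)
The plan is to construct the isomorphism $E \,\longrightarrow\, J^1(K^{-1/2}_X\otimes {\mathbb L})$ out of the connection $D$ by the standard ``first-order jet'' recipe, and then check that it is an isomorphism by a degree/rank argument together with the transversality encoded in Corollary \ref{cor1}. Write $\mathbb{M} \,=\, K^{-1/2}_X\otimes {\mathbb L}$ for the quotient line bundle in \eqref{e4}, so that $\mathbb{M}\otimes K_X \,=\, K^{1/2}_X\otimes {\mathbb L}$ is the sub-line-bundle. First I would recall that the connection $D$ on $E$, singular over $S$, can be restricted along the inclusion $\mathbb{M}\otimes K_X \,=\, K^{1/2}_X\otimes {\mathbb L}\,\hookrightarrow\, E$: the composition $K^{1/2}_X\otimes {\mathbb L}\,\hookrightarrow\, E\,\stackrel{D}{\longrightarrow}\, E\otimes K_X\otimes{\mathcal O}_X(S)$ followed by projection to the sub gives a logarithmic connection-type operator, but more to the point, by Corollary \ref{cor1} the second fundamental form ${\mathbb S}_D(K^{1/2}_X\otimes {\mathbb L})$ is a nowhere-vanishing section of ${\mathcal O}_X$, i.e.\ a nonzero constant; after rescaling we may as well think of it as giving an isomorphism $K^{1/2}_X\otimes {\mathbb L}\,\stackrel{\sim}{\longrightarrow}\, \mathbb{M}\otimes K_X\otimes{\mathcal O}_X(S)\otimes{\mathcal O}_X(-S) \,=\, K^{1/2}_X\otimes {\mathbb L}$ (this is the parabolic analogue of the second-fundamental-form being an isomorphism in the classical oper definition).

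Next I would build the comparison map. Dualizing, a logarithmic connection on $E$ with values twisted by $K_X\otimes{\mathcal O}_X(S)$ is the same as a splitting of $J^1(E)$-type data; but the cleaner route is to go through $\mathbb{M}$: since $D$ is a connection on $E_*$ and $p_0 : E \to \mathbb{M}$ is the quotient, composing a local section of $E$ with $p_0$ and with $D$ produces, for each local section $s$ of $E$, the pair $(p_0(s),\, \text{``}p_0(Ds)\text{''})$, which one checks defines a bundle map $E \,\longrightarrow\, J^1(\mathbb{M})$ sitting in a commuting diagram with the projections $E \to \mathbb{M}$ and $q^0_{\mathbb{M}} : J^1(\mathbb{M}) \to \mathbb{M}$ from \eqref{f1}. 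Concretely: the Leibniz rule \eqref{li} guarantees that $s \,\longmapsto\, $ (the $1$-jet of $p_0(s)$ read off via $D$) is ${\mathcal O}_X$-linear, because the $df$-ambiguity in $D(fs)$ matches exactly the $df$-ambiguity in the $1$-jet of $f\cdot p_0(s)$. I would then check that this map restricts on the sub-bundle $K^{1/2}_X\otimes {\mathbb L}\,\subset\, E$ to a map into $\mathbb{M}\otimes K_X \,\subset\, J^1(\mathbb{M})$, and that this restricted map is precisely (a nonzero constant multiple of) the second fundamental form ${\mathbb S}_D$, hence an isomorphism $K^{1/2}_X\otimes {\mathbb L}\,\stackrel{\sim}{\longrightarrow}\, \mathbb{M}\otimes K_X$ by Corollary \ref{cor1}.

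With that in hand the argument closes by the five lemma. The map $E \,\longrightarrow\, J^1(\mathbb{M})$ fits into a morphism of short exact sequences: the top row is \eqref{e4}, namely $0 \to K^{1/2}_X\otimes {\mathbb L} \to E \to \mathbb{M} \to 0$, and the bottom row is \eqref{f1} with $k=0$ and $V=\mathbb{M}$, namely $0 \to \mathbb{M}\otimes K_X \to J^1(\mathbb{M}) \to \mathbb{M} \to 0$. The right vertical map is $\text{Id}_{\mathbb{M}}$ by construction; the left vertical map is the isomorphism just identified with the second fundamental form; hence the middle vertical map $E \to J^1(\mathbb{M})$ is an isomorphism. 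Finally, $\mathbb{M}\otimes K_X \,=\, K^{-1/2}_X\otimes {\mathbb L}\otimes K_X \,=\, K^{1/2}_X\otimes {\mathbb L}$ under the theta-characteristic identification, which is exactly the identification of sub-bundles claimed in the statement.

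The main obstacle, I expect, is making the construction of the bundle map $E \to J^1(\mathbb{M})$ genuinely canonical and ${\mathcal O}_X$-linear in the presence of the logarithmic poles over $S$: one must be careful that $D$ takes values in $E\otimes K_X\otimes{\mathcal O}_X(S)$ rather than $E\otimes K_X$, so the naive ``$1$-jet of $p_0(s)$'' lands a priori in $J^1(\mathbb{M})\otimes{\mathcal O}_X(S)$, and one needs the compatibility of $\text{Res}(D,x_i)$ with the quasiparabolic flag \eqref{e5} — precisely conditions (1) and (2) in the definition of a connection on $E_*$ — to see that the pole cancels and the image lies in $J^1(\mathbb{M})$ itself. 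This is the step where the parabolic hypotheses (the specific residue eigenvalues $\tfrac{2c_i-1}{2c_i}$ and $\tfrac{1}{2c_i}$) are actually used, beyond just Corollary \ref{cor1}; once the poles are shown to cancel, everything else is the diagram chase above.
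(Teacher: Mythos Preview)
Your overall strategy parallels the paper's: build from $D$ a bundle map $E\to J^1(\mathbb{M})$ (with $\mathbb{M}=K^{-1/2}_X\otimes\mathbb{L}$) and then verify it is an isomorphism. The paper routes the construction through the identification $E\otimes\mathcal{O}_X(-S)\,\stackrel{\sim}{\longrightarrow}\,\ker\big(D'\colon J^1(E)\to E\otimes K_X\otimes\mathcal{O}_X(S)\big)$ given by flat sections, followed by $p_0^{(1)}\colon J^1(E)\to J^1(\mathbb{M})$, and concludes with a degree count together with Corollary~\ref{cor1}. You go more directly and replace the degree count by the five lemma applied to \eqref{e4} and the jet sequence \eqref{f1}; on $X\setminus S$ the two constructions agree, and either endgame is fine once a \emph{holomorphic} map $E\to J^1(\mathbb{M})$ over all of $X$ is in hand.

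The genuine gap is your pole-cancellation claim, which is the crux. The only $\mathcal{O}_X$-linear formula matching your description is
\[
\Phi(s)\;=\;j^1\big(p_0(s)\big)\;-\;\iota_{\mathbb{M}}\big((p_0\otimes\mathrm{id})(Ds)\big),
\]
and on $X\setminus S$ this is exactly the paper's $p_0^{(1)}\circ\sigma_D$. Now the principal part of $(p_0\otimes\mathrm{id})(Ds)$ at $x_i$ is $p_0\big(\mathrm{Res}(D,x_i)(s(x_i))\big)\otimes\tfrac{dz}{z}$, and condition~(2) in the definition of a connection on $E_*$ says $\mathrm{Res}(D,x_i)$ acts on the quotient $\mathbb{M}_{x_i}$ as multiplication by $\tfrac{1}{2c_i}$. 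Hence that principal part equals $\tfrac{1}{2c_i}\,p_0(s(x_i))\otimes\tfrac{dz}{z}$, which is \emph{nonzero} whenever $s(x_i)\notin(K^{1/2}_X\otimes\mathbb{L})_{x_i}$. In other words, the residue hypotheses kill the pole only along the subbundle $K^{1/2}_X\otimes\mathbb{L}$ (this is precisely what underlies \eqref{e10}), not on all of $E$; your $\Phi$ lands only in $J^1(\mathbb{M})+\iota_{\mathbb{M}}\big(\mathbb{M}\otimes K_X\otimes\mathcal{O}_X(S)\big)$, so the five-lemma diagram exists only over $X\setminus S$. The paper confronts this extension problem by first twisting down to $E\otimes\mathcal{O}_X(-S)$, identifying it with $\ker(D')$, and then arguing that the composite with $p_0^{(1)}$ vanishes on the fibers over $S$ so as to untwist back to $E$; you need to carry out that fiberwise verification (or an equivalent argument) rather than assert that the pole cancels.
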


\begin{proof}
Consider the ${\mathcal O}_X$--linear homomorphism
$$
D'\, :\, J^1(E)\, \longrightarrow\, E\otimes K_X\otimes{\mathcal O}_X(S)
$$
given by $D$ (see \eqref{lj}). We shall determine $\text{kernel}(D')\,\subset\,
J^1(E)$. It can be shown that on the complement $X\setminus S$, we have
$\text{kernel}(D')\vert_{X\setminus S}\, =\,E\vert_{X\setminus S}$. Indeed,
$D$ is a holomorphic connection on $E_{X\setminus S}$, and, as mentioned before,
any holomorphic connection on a Riemann surface is flat. The above map
$E\vert_{X\setminus S}\,\longrightarrow\, \text{kernel}(D')\vert_{X\setminus S}$ sends
any $v\, \in\, E_x$ to the element of $J^1(E)_x$ defined by the unique flat section $s$ for
$D$, defined around $x$, with $s(x)\,=\, v$. This isomorphism
$E\vert_{X\setminus S}\,\longrightarrow\, \text{kernel}(D')\vert_{X\setminus S}$ clearly
extends to a homomorphism
\begin{equation}\label{idke}
D_K\, :\, E\otimes{\mathcal O}_X(-S)\, \longrightarrow\, \text{kernel}(D')\, \subset\,
J^1(E)
\end{equation}
over $X$. Let
$$
Q_D\, :=\, \text{kernel}(D')/(E\otimes{\mathcal O}_X(-S))
$$
be the quotient, which is a torsion sheaf; its support is contained in $S$, because
on $X\setminus S$ both $\text{kernel}(D')$ and $E\otimes{\mathcal O}_X(-S)$ are identified
with $E\vert_{X\setminus S}$. Note that we have
\begin{equation}\label{idn}
\text{degree}(\text{kernel}(D'))\,=\,\text{degree}
(E\otimes{\mathcal O}_X(-S))+ \text{degree}(Q_D)\, .
\end{equation}

The homomorphism $D'$ is surjective, because the residue of $D$ at each point $x_i\, \in\, S$
is an isomorphism. From this surjectivity it follows that
$$
\text{degree}(\text{kernel}(D'))\,=\, \text{degree}(J^1(E))
-\text{degree}(E\otimes K_X\otimes{\mathcal O}_X(S))\,=\, - 3m\, .
$$
Since $\text{degree}(E\otimes{\mathcal O}_X(-S))\,=\, - 3m$, from \eqref{idn} it follows that
$\text{degree}(Q_D)\,=\, 0$. Since $Q_D$ is a torsion sheaf of degree zero, we conclude that
$Q_D\,=\, 0$. This implies that the homomorphism $D_K$ in \eqref{idke}
is an isomorphism.

Let $p^{(1)}_0\, :\, J^1(E)\, \longrightarrow\, J^1(K^{-1/2}_X\otimes {\mathbb L})$ be the homomorphism
constructed as in \eqref{f2} for the projection $p_0$ in \eqref{e4}. The composition
$$
p^{(1)}_0\circ D_K\, :\, E\otimes{\mathcal O}_X(-S)\, \longrightarrow\,
J^1(K^{-1/2}_X\otimes {\mathbb L})\, ,
$$
where $D_K$ is the isomorphism in \eqref{idke}, clearly vanishes over $S$. So $p^{(1)}_0\circ D_K$
produces a homomorphism
\begin{equation}\label{idke2}
\widetilde{p^{(1)}_0\circ D_K}\, :\, E\, \longrightarrow\,
J^1(K^{-1/2}_X\otimes {\mathbb L})\, .
\end{equation}
From Corollary \ref{cor1} we know that the section ${\mathbb S}_D(K^{1/2}_X\otimes {\mathbb L})
\, \in\, H^0(X,\,{\mathcal O}_X(S))$ in \eqref{e10}
does not vanish on $X\setminus S$. Using this it follows that $\widetilde{p^{(1)}_0\circ D_K}$
in \eqref{idke2} is an isomorphism over $X\setminus S$. Therefore, $J^1(K^{-1/2}_X\otimes {\mathbb L})/E$
is a torsion sheaf of degree zero, because
$\text{degree}(E)\,=\,-m\, =\, \text{degree}(J^1(K^{-1/2}_X\otimes {\mathbb L}))$. Hence we have
$J^1(K^{-1/2}_X\otimes {\mathbb L})/E\,=\, 0$, implying that
$\widetilde{p^{(1)}_0\circ D_K}$ in \eqref{idke2} is an isomorphism over entire $X$.

Over $X\setminus S$, the isomorphism $\widetilde{p^{(1)}_0\circ D_K}$ evidently
takes the subbundle $$(K^{1/2}_X\otimes {\mathbb L})\vert_{X\setminus S}\, \subset\,
E\vert_{X\setminus S}$$ in \eqref{e4} to the subbundle
$$((K^{-1/2}_X\otimes {\mathbb L})\otimes K_X)\vert_{X\setminus S}\,=\,
(K^{1/2}_X\otimes {\mathbb L})\vert_{X\setminus S}\, \subset\, J^1(K^{-1/2}_X\otimes
{\mathbb L})\vert_{X\setminus S}$$
in \eqref{f1}. Consequently, the isomorphism $\widetilde{p^{(1)}_0\circ D_K}$ over $X$
takes the subbundle $K^{1/2}_X\otimes {\mathbb L}\, \subset\, E$ to the subbundle
$K^{1/2}_X\otimes {\mathbb L}\, \subset\, J^1(K^{1/2}_X\otimes {\mathbb L})$.
\end{proof}

\begin{remark}\label{rem2}
It should be clarified that the isomorphism $\widetilde{p^{(1)}_0\circ D_K}$ in \eqref{idke2} depends
on the logarithmic connection $D$.
\end{remark}

\begin{theorem}\label{thm1}
The space of parabolic ${\rm SL}_2$--opers on $X$ is an affine space for
the vector space $H^0(X, \, K^2_X\otimes {\mathcal O}_X(S))$.
\end{theorem}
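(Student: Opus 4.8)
The plan is to fix a reference parabolic $\mathrm{SL}_2$--oper $D_0$, which exists by Lemma~\ref{lem2}, and to show that every other parabolic $\mathrm{SL}_2$--oper is obtained from $D_0$ by adding an element of $H^0(X,\, K_X^2\otimes{\mathcal O}_X(S))$, and that conversely every such addition produces a parabolic $\mathrm{SL}_2$--oper, with distinct elements giving inequivalent connections. First I would observe that if $D$ and $D_0$ are two connections on $E_*$, their difference $D-D_0$ is ${\mathcal O}_X$--linear, hence an element of $H^0(X,\, \mathrm{End}(E)\otimes K_X\otimes{\mathcal O}_X(S))$; the condition that both be \emph{connections on the parabolic bundle} $E_*$ forces the residues of $D$ and $D_0$ at each $x_i$ to agree (they are prescribed by the parabolic weights), so $D-D_0$ is a section of $\mathrm{End}(E)\otimes K_X$, i.e. lies in $H^0(X,\, \mathrm{End}(E)\otimes K_X)$. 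The $\mathrm{SL}_2$--condition (that the induced connection on $\bigwedge^2 E$ be the fixed de Rham connection on ${\mathcal O}_X(-S)$) then says $\mathrm{trace}(D-D_0)=0\in H^0(X,\, K_X)$, so $D-D_0\in H^0(X,\, \mathrm{End}^0(E)\otimes K_X)$, where $\mathrm{End}^0$ denotes the traceless endomorphisms.

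The key computation is therefore to identify $H^0(X,\, \mathrm{End}^0(E)\otimes K_X)$, using the explicit description of $E$ as the nonsplit extension \eqref{e4} and Lemma~\ref{lem0}. By Lemma~\ref{lem3}, a parabolic $\mathrm{SL}_2$--oper identifies $E$ with $J^1(K_X^{-1/2}\otimes{\mathbb L})$, so I would filter $\mathrm{End}^0(E)\otimes K_X$ using the jet filtration \eqref{f1} and the two-step structure of $J^1$: concretely, $E$ sits in \eqref{e4} with sub $K_X^{1/2}\otimes{\mathbb L}$ and quotient $K_X^{-1/2}\otimes{\mathbb L}$, so $\mathrm{End}^0(E)$ has a filtration whose graded pieces are $\mathrm{Hom}(K_X^{-1/2}\otimes{\mathbb L},\, K_X^{1/2}\otimes{\mathbb L})=K_X$, then the Cartan part ${\mathcal O}_X$, then $\mathrm{Hom}(K_X^{1/2}\otimes{\mathbb L},\, K_X^{-1/2}\otimes{\mathbb L})=K_X^{-1}\otimes{\mathcal O}_X(-S)$. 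Tensoring with $K_X$ and taking cohomology: the top piece contributes $H^0(X,\, K_X^2)$, the middle $H^0(X,\, K_X)$, and the bottom $H^0(X,\, {\mathcal O}_X(-S))=0$ since $S\neq\emptyset$. The extension \eqref{e4} being nonsplit is exactly what is needed to compute the connecting maps and show that the relevant long exact sequences collapse so that $H^0(X,\, \mathrm{End}^0(E)\otimes K_X)$ is an extension of a subspace involving $H^0(X,\, K_X)$ by $H^0(X,\, K_X^2)$; the nilpotent subspace $H^0(X,\, K_X)\subset H^0(X,\, \mathrm{End}(E))$ from \eqref{sp}, tensored appropriately, must be quotiented out by the equivalence relation. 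This is the step I expect to be the main obstacle: bookkeeping the precise interaction between the jet-bundle twist in Lemma~\ref{lem3}, the coboundary maps coming from nonsplitness of \eqref{e4}, and the identification of ${\mathbb L}\otimes{\mathcal O}_X(S)\cong{\mathbb L}^{-1}$ via $\varphi_0$, to land cleanly on $H^0(X,\, K_X^2\otimes{\mathcal O}_X(S))$ rather than $H^0(X,\, K_X^2)$.

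To handle the extra $S$--twist and the $\mathrm{O}_X(S)$ in the target, I would instead work directly with the logarithmic picture: $D-D_0$, as a logarithmic Higgs-type field, is a section of $\mathrm{End}^0(E)\otimes K_X\otimes{\mathcal O}_X(S)$ whose residue at each $x_i$ vanishes after accounting for the parabolic constraints — but the transversality built into the oper (via Lemma~\ref{lem3}, $E\cong J^1$) means the "lowest order" component of $D-D_0$ along the oper filtration is unconstrained and lives precisely in $\mathrm{Hom}(K_X^{-1/2}\otimes{\mathbb L},\, K_X^{-1/2}\otimes{\mathbb L}\otimes K_X^2)\otimes{\mathcal O}_X(S)=K_X^2\otimes{\mathcal O}_X(S)$, while all higher components are killed by combining the $\mathrm{SL}_2$--trace condition, the prescribed residues, and the vanishing $H^0(X,\, {\mathcal O}_X(-S))=0$. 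Thus the map $D\mapsto D-D_0$ followed by projection to this bottom piece is an isomorphism onto $H^0(X,\, K_X^2\otimes{\mathcal O}_X(S))$.

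Finally I would address equivalence and the converse. For the converse: given $s\in H^0(X,\, K_X^2\otimes{\mathcal O}_X(S))$, the operator $D_0+s$ (with $s$ interpreted as a lower-triangular logarithmic endomorphism-valued one-form via the oper filtration) automatically satisfies the transversality and second-fundamental-form conditions because $D_0$ does and $s$ is strictly lower triangular, it has the correct residues, and its induced connection on $\bigwedge^2 E$ is unchanged since $s$ is traceless; so it is again a parabolic $\mathrm{SL}_2$--oper. For well-definedness up to equivalence: by Lemma~\ref{lem0}, every holomorphic automorphism of $E$ is $c\cdot\mathrm{Id}_E+\alpha$ with $\alpha\in H^0(X,\, K_X)$ nilpotent; conjugating $D_0$ by $c\cdot\mathrm{Id}_E$ does nothing, and conjugating by $\mathrm{Id}_E+\alpha$ changes $D_0$ by a term $[\,\alpha,\, D_0\,]+(\text{higher})$ whose bottom component along the oper filtration vanishes (since $\alpha$ is itself strictly lower triangular of the same type, being a multiple of the sub-to-quotient-twisted-by-$K_X$ map). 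Hence the affine coordinate $s$ is an equivalence invariant and is unchanged under the automorphism action, so the assignment $[D_0+s]\leftrightarrow s$ is a well-defined bijection between parabolic $\mathrm{SL}_2$--opers and $H^0(X,\, K_X^2\otimes{\mathcal O}_X(S))$, and it is manifestly affine-linear: changing the base point $D_0$ translates $s$. This gives the asserted affine-space structure.
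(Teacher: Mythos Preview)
Your overall strategy --- parametrize parabolic $\mathrm{SL}_2$-connections on the fixed bundle $E_*$ and then quotient by $\mathrm{Aut}(E_*)$ --- is a legitimate route and genuinely different from the paper's, but the execution has two real gaps.

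First, the residues are \emph{not} fully determined by the parabolic structure. The definition only forces $\mathrm{Res}(D,x_i)$ to preserve the line $(K^{1/2}_X\otimes\mathbb{L})_{x_i}$ with prescribed eigenvalues on the graded pieces; the strictly nilpotent part of the residue is free. Hence $D-D_0$ does not lie in $H^0(\mathrm{End}^0(E)\otimes K_X)$ but only in $H^0(W\otimes K_X)$, where $W\subset\mathrm{End}^0(E)\otimes\mathcal{O}_X(S)$ allows a simple pole along the nilpotent line $N:=\eta(K_X)\cong K_X$. Relatedly, your bottom graded piece of $\mathrm{End}^0(E)$ is $K_X^{-1}$, not $K_X^{-1}\otimes\mathcal{O}_X(-S)$; after tensoring with $K_X$ this gives $\mathcal{O}_X$, whose $H^0$ does not vanish, so the ``higher components are killed'' step fails as written.

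Second, the ``projection to the bottom piece'' is ill-defined: the piece $N\otimes K_X\otimes\mathcal{O}_X(S)\cong K^2_X\otimes\mathcal{O}_X(S)$ is a \emph{subbundle} of $W\otimes K_X$, not a quotient, so there is no canonical projection, and the $N$-component of $\theta=D-D_0$ is not gauge-invariant (conjugating by $\mathrm{Id}_E+\alpha$ alters it by a term involving $d\alpha$). What your approach actually needs is the statement that every $\theta\in H^0(W\otimes K_X)$ is gauge-equivalent to a \emph{unique} element of the subspace $H^0(N\otimes K_X\otimes\mathcal{O}_X(S))=H^0(K^2_X\otimes\mathcal{O}_X(S))$. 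Proving this requires computing $H^0((\mathrm{End}^0(E)/N)\otimes K_X)=H^0(K_X)$ --- here the nonsplitness of \eqref{e4} enters through the connecting map $H^0(\mathcal{O}_X)\to H^1(K_X)$ --- and then checking, via Corollary~\ref{cor1}, that the gauge action of $H^0(K_X)$ surjects onto this quotient. You gesture at these ingredients but do not carry them out.

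By contrast, the paper associates to each $D$ a second-order holomorphic differential operator on $K^{-1/2}_X\otimes\mathbb{L}$, using the isomorphism of Lemma~\ref{lem3} together with the inclusion $\mathbf{t}:J^2\hookrightarrow J^1(J^1)$ of \eqref{f3}. This operator has symbol the constant $1\in H^0(\mathcal{O}_X(S))$, the $\mathrm{SL}_2$-condition forces its first-order part to vanish, and the remaining zeroth-order part is the affine coordinate in $H^0(K^2_X\otimes\mathcal{O}_X(S))$. The gauge quotient is absorbed automatically in this packaging (the map $D\mapsto D_1$ is not injective, but the resulting operator $D_1\circ\mathbf{t}$ is an invariant), so the filtration and cohomology bookkeeping your approach would need is bypassed.
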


\begin{proof}
Let $D$ be a connection on $E_*$ defining a parabolic ${\rm SL}_2$--oper on $X$. Consider
the isomorphism $\widetilde{p^{(1)}_0\circ D_K}$ constructed in \eqref{idke2}. Let $$D_1\,=\,
(\widetilde{p^{(1)}_0\circ D_K})_* D$$ be the logarithmic connection on $J^1(K^{-1/2}_X
\otimes {\mathbb L})$ given by the logarithmic connection $D$
on $E$ using this isomorphism. The composition homomorphism
\begin{equation}\label{e23}
J^1(J^1(K^{-1/2}_X\otimes {\mathbb L}))\, \stackrel{D_1}{\longrightarrow}\,
J^1(K^{-1/2}_X\otimes {\mathbb L})\otimes K_X\otimes {\mathcal O}_X(S)\,\longrightarrow\,
J^0(K^{-1/2}_X\otimes {\mathbb L})\otimes K_X\otimes {\mathcal O}_X(S)
\end{equation}
$$
=\,(K^{-1/2}_X\otimes{\mathbb L})\otimes K_X\otimes{\mathcal O}_X(S)
\,=\,K^{1/2}_X\otimes{\mathbb L}\otimes{\mathcal O}_X(S)\,=\, K^{1/2}_X\otimes{\mathbb L}^*
$$
will be denoted by $D'_1$ (see \eqref{lj}); the projection
$$
J^1(K^{-1/2}_X\otimes {\mathbb L})\otimes K_X\otimes{\mathcal O}_X(S)\,\longrightarrow\,
J^0(K^{-1/2}_X\otimes {\mathbb L})\otimes K_X\otimes{\mathcal O}_X(S)
$$
in \eqref{e23} is the homomorphism $J^1(K^{-1/2}_X\otimes {\mathbb L})\,\longrightarrow\,
J^0(K^{-1/2}_X\otimes {\mathbb L})$ in \eqref{f1} tensored with the identity map
of $K_X\otimes{\mathcal O}_X(S)$. Consider the homomorphism $\mathbf t$ in \eqref{f3}. Let
$D''_1$ denote the composition
$$
J^2(K^{-1/2}_X\otimes {\mathbb L}) \, \stackrel{\mathbf t}{\longrightarrow}\,
J^1(J^1(K^{-1/2}_X\otimes {\mathbb L})) \, \stackrel{D'_1}{\longrightarrow}\,
K^{1/2}_X\otimes {\mathbb L}\otimes {\mathcal O}_X(S)\,=\, K^{1/2}_X\otimes {\mathbb L}^*\, .
$$
From the construction of the isomorphism $\widetilde{p^{(1)}_0\circ D_K}$ in \eqref{idke2}
it follows that this homomorphism $D''_1$ vanishes identically. Therefore, the image of
the composition $D_1\circ{\mathbf t}$ lies in the subbundle
$$
(K^{-1/2}_X\otimes {\mathbb L})\otimes K_X \otimes (K_X\otimes {\mathcal O}_X(S))\,\subset\,
J^1(K^{-1/2}_X\otimes {\mathbb L})\otimes (K_X\otimes {\mathcal O}_X(S))\, ;
$$
see \eqref{f1} for the above inclusion $(K^{-1/2}_X\otimes {\mathbb L})\otimes K_X\, \hookrightarrow\,
J^1(K^{-1/2}_X\otimes {\mathbb L})$. In other words, we have the homomorphism
$$
J^2(K^{-1/2}_X\otimes {\mathbb L}) \, \stackrel{D_1\circ{\mathbf t}}{\longrightarrow}\,
(K^{-1/2}_X\otimes {\mathbb L})\otimes K_X \otimes K_X\otimes {\mathcal O}_X(S)
$$
$$
=\, K^{3/2}_X\otimes {\mathbb L}\otimes {\mathcal O}_X(S)\,=\, K^{3/2}_X\otimes {\mathbb L}^*\, \subset\,
J^1(K^{-1/2}_X\otimes {\mathbb L})\otimes K_X\otimes {\mathcal O}_X(S)\, .
$$
Consequently, we have a holomorphic differential operator
\begin{equation}\label{e21}
D_1\circ{\mathbf t}\, \in\, H^0(X,\, \text{Diff}^2(K^{-1/2}_X\otimes {\mathbb L},\,
K^{3/2}_X\otimes {\mathbb L}\otimes {\mathcal O}_X(S)))\,=\,
H^0(X,\, \text{Diff}^2(K^{-1/2}_X\otimes {\mathbb L},\,
K^{3/2}_X\otimes {\mathbb L}^*))
\end{equation}
of order two (see \eqref{f4}).

While $D_1\circ{\mathbf t}$ is constructed above from the homomorphism $D_1$ in \eqref{e23}, 
we shall now show that the homomorphism $D_1$ can also be recovered back from the differential 
operator $D_1\circ{\mathbf t}$. For this, consider the two subsheaves
$$
{\mathbf t}(\text{kernel}(D_1\circ{\mathbf t}))\ \ \text{ and }\ \
J^1(K^{-1/2}_X\otimes {\mathbb L})\otimes K_X
$$
of $J^1(J^1(K^{-1/2}_X\otimes {\mathbb L}))$ (see \eqref{f1} for the second subsheaf).
The composition
$$
J^1(K^{-1/2}_X\otimes {\mathbb L})\otimes K_X\, \hookrightarrow\,
J^1(J^1(K^{-1/2}_X\otimes {\mathbb L}))\, \longrightarrow\,
J^1(J^1(K^{-1/2}_X\otimes {\mathbb L}))/({\mathbf t}(\text{kernel}(D_1\circ{\mathbf t})))
$$
vanishes on $S$. Therefore, this composition gives a homomorphism
\begin{equation}\label{e22}
D_2\, :\, J^1(K^{-1/2}_X\otimes {\mathbb L})\otimes K_X\otimes{\mathcal O}_X(S)\, \longrightarrow\,
J^1(J^1(K^{-1/2}_X\otimes {\mathbb L}))/({\mathbf t}(\text{kernel}(D_1\circ{\mathbf t})))
\end{equation}
which is an isomorphism over $X\setminus S$. On the other hand, we have
$$
\text{degree}(J^1(K^{-1/2}_X\otimes {\mathbb L})\otimes K_X\otimes{\mathcal O}_X(S))
\,=\, \text{degree}(J^1(J^1(K^{-1/2}_X\otimes {\mathbb L}))/({\mathbf t}
(\text{kernel}(D_1\circ{\mathbf t}))))\, .
$$
Since any homomorphism between two holomorphic vector bundles of same degree is an isomorphism
if it is generically an isomorphism, we now conclude that
the homomorphism $D_2$ in \eqref{e22} is an isomorphism. Finally, the composition
$$
J^1(J^1(K^{-1/2}_X\otimes {\mathbb L}))\, \longrightarrow\,
J^1(J^1(K^{-1/2}_X\otimes {\mathbb L}))/({\mathbf t}(\text{kernel}(D_1\circ{\mathbf t})))
$$
$$
\stackrel{D^{-1}_2}{\longrightarrow}\, 
J^1(K^{-1/2}_X\otimes {\mathbb L})\otimes K_X\otimes{\mathcal O}_X(S)
$$
coincides with $D_1$ in \eqref{e23}. Therefore, $D_1$ is uniquely determined by
$D_1\circ{\mathbf t}$.

It should be clarified that this does not imply that the map $D\, \longmapsto\, D_1$ is
injective. In fact, this map is \textit{not} injective.

The symbol of the differential operator $D_1\circ{\mathbf t}$ in \eqref{e21} coincides with the
section of ${\mathcal O}_X(S)$ given by the constant function $1$.
The condition in Definition \ref{def1} that the connection on $\bigwedge^2 E\,=\, {\mathcal 
O}_X(-S)$ induced by $D$ coincides with the tautological logarithmic connection on ${\mathcal 
O}_X(-S)$ given by the de Rham differential, implies that the connection on $\bigwedge^2 
J^1(K^{-1/2}_X\otimes {\mathbb L})\,=\, {\mathcal O}_X(-S)$ induced by $D_1$ coincides with the 
tautological logarithmic connection on ${\mathcal O}_X(-S)$ given by the de Rham differential. 
This is equivalent to the condition that the first order part of the differential operator 
$D_1\circ{\mathbf t}$ in \eqref{e21} vanishes. The space of second order holomorphic differential 
operators satisfying the above conditions is an affine space for the vector space
$$
H^0(X,\, \text{Hom}(K^{-1/2}_X\otimes {\mathbb L},\,
K^{3/2}_X\otimes {\mathbb L}\otimes {\mathcal O}_X(S)))\,=\, H^0(X, \, K^2_X\otimes {\mathcal O}_X(S))\, .
$$
Using this it follows that the space of parabolic ${\rm SL}_2$--oper on $X$ is an affine space for
the vector space $H^0(X, \, K^2_X\otimes {\mathcal O}_X(S))$.
\end{proof}

\section{Parabolic ${\rm SL}_r$--opers}\label{se5}

\subsection{Parabolic tensor product and parabolic dual}

In \cite{MY} an equivalent formulation of the definition of parabolic bundles was given. We shall
recall it now.

Let $V\, \longrightarrow\, X$ be a holomorphic vector bundle.
Let
$$
V_{x_i} \,=:\, F_{i,1} \, \supsetneq\, \cdots\, \supsetneq\,
F_{i,j} \, \supsetneq\, \cdots\,
\supsetneq\, F_{i,a_i}\, \supsetneq\, F_{i,a_i+1} \,=\, 0
$$
be a quasiparabolic filtration
over each point $x_i$ of $S$. Fix parabolic weights
$$
0\, \leq\, \alpha_{i,1} \, <\, \cdots\, <\, \alpha_{i,j}
\, <\, \cdots\, <\, \alpha_{i,a_i}\, <\, 1
$$
associated to these quasiparabolic flags (\cite{MS}, \cite[p. 67]{Se}).

Now, for a point $x_i\, \in\, S$ and $t\, \in\,
[0 ,\,1]$, let
$$
V^{i,t}\, \subset\, V
$$
be the coherent subsheaf defined as follows: if $t\, \leq\, \alpha_{i,1}$, then
$$
V^{i,t}\, =\, V\, ,
$$
if $t\, >\, \alpha_{i,1}$, then $V^{i,t}$ is defined by the short exact sequence of sheaves
$$
0\, \longrightarrow\, V^{i,t}\, \longrightarrow\, V
\, \longrightarrow\, V/F_{i,j+1} \, \longrightarrow\, 0\, ,
$$
where $j\, \in\, [1,\, a_i]$ is the largest number such that 
$\alpha_{i,j}\, < \, t$. For $t\, \in\, [0,\, 1]$, define
$$
V^{(t)}\, =\, \bigcap_{i=1}^m V^{i,t}\, \subset\, V \, .
$$
Now we have a filtration of coherent sheaves
$\{V_t\}_{t\in \mathbb R}$ defined by
$$
V_t\, :=\, V^{(t-[t])}\otimes {\mathcal O}_X(-[t]S)\, ,
$$
where $[t]$ is the integral part of $t$, meaning $0\, \leq\, t-[t]\, <\, 1$. From
the construction of the filtration $\{V_t\}_{t\in \mathbb R}$
it is evident that the parabolic vector bundle $(V\, ,
\{F_{i,j}\}\, ,\{\alpha_{i,j}\})$ can be recovered from it. 
This description of parabolic bundles was introduced in \cite{MY}.

Let
$$
\iota\, :\, X\setminus S\, \hookrightarrow\, X
$$
be the inclusion of the complement. Let $\{V_t\}_{t\in \mathbb R}$ and $\{W_t\}_{t\in \mathbb R}$
be the filtrations corresponding to two parabolic
vector bundles $V_*$ and $W_*$ respectively. Consider the 
torsionfree quasi--coherent
sheaf $\iota_*((V_0\bigotimes W_0)\vert_{X\setminus S})$ on $X$.
Note that $V_s\bigotimes W_t$ is a coherent subsheaf of it for
all $s$ and $t$. For any $t\, \in\,
\mathbb R$, let
$$
{\mathcal E}_t\, \subset\, \iota_*((V_0\otimes W_0)\vert_{X
\setminus S})
$$
be the quasi--coherent subsheaf generated by all
$V_\alpha\bigotimes W_{t-\alpha}$, $\alpha\, \in\, \mathbb R$.
It is easy to see that ${\mathcal E}_t$ is a coherent sheaf, and
the collection $\{{\mathcal E}_t\}_{t\in \mathbb R}$ satisfies all
the three conditions needed to define a parabolic vector bundle
on $X$ with parabolic structure over $S$.
The parabolic vector bundle defined by $\{{\mathcal E}_t\}_{t\in \mathbb 
R}$ is denoted by $V_*\bigotimes W_*$, and it is called the
\textit{tensor product} of $V_*$ and $W_*$.

Now consider the torsionfree quasi--coherent
sheaf $\iota_*((V^*_0\bigotimes W_0)\vert_{X\setminus S})$ on $X$.
For any $t\, \in\, \mathbb R$, let
$$
{\mathcal F}_t\, \subset\, \iota_*((V^*_0\otimes W_0)\vert_{X
\setminus S})
$$
be the quasi--coherent subsheaf generated by all
$V^*_\alpha\bigotimes W_{\alpha+t}$, $\alpha\, \in\, \mathbb R$.
This ${\mathcal F}_t$ is a coherent sheaf, and
the collection $\{{\mathcal F}_t\}_{t\in \mathbb R}$ satisfies the
three conditions needed to define a parabolic vector bundle
with parabolic structure over $S$.
The parabolic vector bundle defined by $\{{\mathcal F}_t\}_{t\in
\mathbb R}$ is denoted by ${\rm Hom}(V_*\, , W_*)$. When $W_*$ is the
trivial parabolic line bundle with trivial parabolic structure, then
${\rm Hom}(V_*\, , W_*)$ is the parabolic dual $V^*_*$.

For any parabolic structure on $V$ and any integer $d\, \geq\, 2$, the tensor product $V^{\otimes d}$ is a subsheaf of the vector bundle underlying the
parabolic tensor product $(V_*)^{\otimes d}$. Let 
${\mathcal S}_d$ denote the
group of permutations of the set $\{1,\, \cdots,\, d\}$. The natural action
of ${\mathcal S}_d$ on $V^{\otimes d}$ that permutes the factors extends to
an action of ${\mathcal S}_d$ on the vector bundle underlying the
parabolic bundle $(V_*)^{\otimes d}$. The parabolic symmetric
product $\text{Sym}^d(V_*)$ is the parabolic bundle given by the fixed point locus of this
action of ${\mathcal S}_d$ on $(V_*)^d$; note that this fixed point locus
is equipped with the induced parabolic structure.

The exterior products of a parabolic vector bundle are defined in a similar way. The parabolic 
determinant bundle of a parabolic vector bundle $V_*$ is the $m$-th degree parabolic exterior 
product of $V_*$, where $m$ is the rank of $V_*$.

\subsection{Connections on a parabolic symmetric product}\label{se5.2}

Consider the rank two parabolic bundle $E_*$ in Section \ref{se3}. For any integer $r\, \geq\, 2$,
the parabolic symmetric product $\text{Sym}^{r-1}(E_*)$ will be denoted by ${\mathbb E}_r$. So
${\mathbb E}_r$ is a parabolic vector bundle of rank $r$.

\begin{corollary}\label{cor3}
The parabolic bundle ${\mathbb E}_r$ admits a connection.
\end{corollary}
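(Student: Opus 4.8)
The plan is to deduce Corollary \ref{cor3} from Proposition \ref{prop1}(1) together with the compatibility of connections with the parabolic tensor and symmetric product operations. First I would recall that by Proposition \ref{prop1}(1) the parabolic bundle $E_*$ admits a connection, say $D$. A connection on a parabolic bundle induces, in a canonical functorial way, a connection on any parabolic tensor power: this is essentially the statement that if $D_1$ and $D_2$ are connections on parabolic bundles $V_*$ and $W_*$, then $D_1\otimes \mathrm{Id} + \mathrm{Id}\otimes D_2$ descends to a connection on $V_*\otimes W_*$ (one checks this over $X\setminus S$ using the usual Leibniz rule, and then verifies that the residues at each $x_i$ act by the sums of the corresponding parabolic weights, which are precisely the weights of the tensor product parabolic structure). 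Iterating, $D$ induces a connection $D^{\otimes(r-1)}$ on $(E_*)^{\otimes(r-1)}$.

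Next I would observe that this induced connection on $(E_*)^{\otimes(r-1)}$ is equivariant for the natural action of the symmetric group $\mathcal S_{r-1}$ permuting the factors, since the construction is symmetric in the factors. Therefore it restricts to a connection on the $\mathcal S_{r-1}$-invariant part, which by definition is the parabolic symmetric product $\mathrm{Sym}^{r-1}(E_*) = \mathbb E_r$. One should check that the restriction of a logarithmic connection to a direct summand (here the invariant subbundle, which is a direct summand since $\mathcal S_{r-1}$ is a finite group and we are in characteristic zero) singular over $S$ remains a logarithmic connection singular over $S$, and that its residues are the restrictions of the ambient residues; hence the eigenvalues of the residue of the induced connection on $\mathbb E_r$ at $x_i$ lie among the parabolic weights of $\mathbb E_r$ at $x_i$, and act on the quasiparabolic subspaces as multiplication by the appropriate weights. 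This is exactly the condition for the induced connection to be a connection on the parabolic bundle $\mathbb E_r$ in the sense used in Section \ref{se3}.

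The only mildly delicate point — and the one I would expect to be the main obstacle to write carefully — is checking that the residues behave correctly under the parabolic tensor product: that is, that for the parabolic structure on $(E_*)^{\otimes(r-1)}$ built from the filtration description of \cite{MY}, the residue of the induced logarithmic connection at $x_i$ preserves each piece of the quasiparabolic filtration and acts on the associated graded by the correct (summed) weights, with all the resulting weights normalized to lie in $[0,1)$ after the appropriate twist by ${\mathcal O}_X(-S)$. Since $E_*$ has only two parabolic weights $\frac{1}{2c_i}$ and $\frac{2c_i-1}{2c_i}$ at each $x_i$, the symmetric product $\mathbb E_r$ has weights of the form $\frac{(r-1-j)\cdot 1 + j\cdot(2c_i-1)}{2c_i} \pmod 1$ for $0\le j\le r-1$, and one checks that the induced residue is diagonalizable with exactly these eigenvalues, confirming compatibility with the parabolic structure. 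Granting the standard fact that connections on parabolic bundles are closed under parabolic tensor product and under passage to direct summands, the corollary follows at once. In fact the whole argument can be phrased in one line: the functor $V_*\mapsto$ (category of connections on $V_*$) is compatible with tensor products and with taking the $\mathcal S_{r-1}$-invariants, so a connection on $E_*$ yields one on $\mathrm{Sym}^{r-1}(E_*)=\mathbb E_r$.
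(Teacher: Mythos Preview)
Your proposal is correct and follows essentially the same route as the paper: start from a connection on $E_*$ (via Proposition~\ref{prop1}(1)), pass to the induced connection on the parabolic tensor power $(E_*)^{\otimes(r-1)}$, and then observe that it restricts to the $\mathcal{S}_{r-1}$-invariant parabolic subbundle $\mathrm{Sym}^{r-1}(E_*)=\mathbb{E}_r$. The paper's proof is just the one-line version of exactly this argument, without your additional discussion of residues and weights.
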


\begin{proof}
A connection on $E_*$ induces a connection on the parabolic tensor product $(E_*)^{\otimes 
(r-1)}$. This connection on $(E_*)^{\otimes (r-1)}$ preserves the parabolic subbundle 
${\mathbb E}_r\,=\, \text{Sym}^{r-1}(E_*) \, \subset\, (E_*)^{\otimes (r-1)}$. Therefore, from 
Proposition \ref{prop1}(1) it follows that ${\mathbb E}_r$ admits a connection.
\end{proof}

Note that the parabolic determinant bundle $\det E_*$ for $E_*$, which is the same as the second parabolic
exterior product of $E_*$, is the trivial line bundle on $X$ with the trivial parabolic
structure. We observe that the determinant of the underlying
bundle $E$ equals ${\mathcal O}_X(-S)$ with parabolic weights
$1$ on $S$, which corresponds to the trivial parabolic line
bundle ${\mathcal O}_X$ with parabolic weights $0$ on $S$.
Using this it is straight-forward to deduce that parabolic determinant bundle
$\det {\mathbb E}_r$, which is the same as the $r$--th parabolic exterior product of ${\mathbb E}_r$,
is also the trivial line bundle on $X$ with the trivial parabolic structure.

A connection on the parabolic vector bundle ${\mathbb E}_r$ induces a connection on the
parabolic line bundle $\det {\mathbb E}_r$. Indeed, a connection on ${\mathbb E}_r$
induces a connection on the parabolic tensor product $({\mathbb E}_r)^{\otimes r}$. This
induced connection on $({\mathbb E}_r)^{\otimes r}$ preserves the parabolic subbundle
$\det {\mathbb E}_r\, \subset\, ({\mathbb E}_r)^{\otimes r}$.

In the proof of Corollary \ref{cor3} we saw that a connection $D$ on the parabolic vector bundle 
$E_*$ produces a connection on the parabolic vector bundle ${\mathbb E}_r$. Now if $D$ is a ${\rm 
SL}_2$--oper, meaning the logarithmic connection on $\bigwedge^2 E\,=\, {\mathcal O}_X(-S)$ 
induced by $D$ coincides with the one given by the de Rham differential, then the connection on the
parabolic determinant line bundle $\bigwedge^2 E_*$ coincides with the trivial connection on the 
trivial line bundle given by the de Rham differential. Using this it follows that the connection 
on the parabolic bundle ${\mathbb E}_r$ given by $D$ has the property that the connection on the 
parabolic line bundle $\det {\mathbb E}_r$ induced by it also coincides with the trivial 
connection on the trivial line bundle given by the de Rham differential.

Two connections on the parabolic bundle ${\mathbb E}_r$ are called \textit{equivalent}
is they differ by a holomorphic automorphism
of the parabolic vector bundle ${\mathbb E}_r$.

\begin{definition}\label{def2}
A {\it parabolic} ${\rm SL}_r$--{\it oper} is an equivalence class of
connections $D$ on the parabolic bundle ${\mathbb E}_r$ such that the
connection on $\det {\mathbb E}_r$ induced by $D$ coincides with the trivial
connection on $\det {\mathbb E}_r \,=\, {\mathcal O}_X$ given by the de Rham differential.
\end{definition}

\section{Special rational parabolic weights}\label{se6}

Henceforth, we assume that
$$
c_i\, :=\, {\mathbf c}(x_i)\, \in\, \mathbb N
$$
for all $x_i\, \in\, S$, where $\mathbf c$ is the function in \eqref{e2}. We also assume that
the integer $r$ is odd.

There is a ramified Galois covering
\begin{equation}\label{gc}
\rho\, :\, Y\, \longrightarrow\, X
\end{equation}
satisfying the following conditions:
\begin{itemize}
\item $\rho$ is unramified over the complement $X\setminus S$;

\item for every $x_i\, \in\, S$ and each point $y\, \in\, \rho^{-1}(x_i)$, the order of ramification
of $\rho$ at $y$ is $c_i$.
\end{itemize}
Such a covering $\rho$ exists; see \cite[p. 26, Proposition 1.2.12]{Na}.

The Galois group $\text{Gal}(\rho)$ for $\rho$ will
be denoted by $\Gamma$. The holomorphic cotangent bundle of $Y$ will be denoted by $K_Y$.
The action of $\Gamma$ on $Y$ produces an action of $\Gamma$ on $K_Y$.

A $\Gamma$--equivariant vector bundle on $Y$ is a holomorphic
vector bundle $V$ on $Y$ equipped with a lift of the action of $\Gamma$ such that the
action of any $\gamma\, \in\, \Gamma$ on $V$ maps $V_y$ to $V_{\gamma(y)}$ linearly
for every $y\, \in\, Y$.

Since the parabolic weights of $E_*$ at $x_i\, \in\, S$ are integral multiples of 
$\frac{1}{2c_i}$, the parabolic weights, at $x_i$, of the parabolic symmetric product 
$\text{Sym}^j(E_*)$ are integral multiples of $\frac{j}{2c_i}$. In particular, the parabolic 
weights of $\text{Sym}^{2j}(E_*)$ at $x_i$ are integral multiples of $\frac{1}{c_i}$. Recall 
that for each point $y\, \in\, \rho^{-1}(x_i)$, the order of ramification of $\rho$ at $y$ is 
$c_i$. Consequently, there is a unique $\Gamma$--equivariant vector bundle ${\mathcal E}_r$ on 
$Y$ of rank $r$ that corresponds to the parabolic vector bundle $\text{Sym}^{r-1}(E_*) \,=\, 
{\mathbb E}_r$ \cite{Bi}.

This action of $\Gamma$ on ${\mathcal E}_r$ induces an action of $\Gamma$ on the holomorphic line 
bundle $\bigwedge^r {\mathcal E}_r\,=\, \det {\mathcal E}_r$. Since the $\Gamma$--equivariant 
bundle ${\mathcal E}_r$ corresponds to the parabolic bundle ${\mathbb E}_r$, the 
$\Gamma$--equivariant line bundle $\det {\mathcal E}_r$ corresponds to the parabolic line bundle 
$\det {\mathbb E}_r$. As noted in Section \ref{se5.2}, the parabolic line bundle $\det {\mathbb 
E}_r$ is the trivial line bundle with the trivial parabolic structure. Therefore, the 
$\Gamma$--equivariant line bundle $\det {\mathcal E}_r$ corresponding to ${\mathbb E}_r$ is the 
trivial line bundle on $Y$ equipped with the trivial action of $\Gamma$; this means that $\Gamma$ 
act diagonally on the trivial line bundle $Y\times {\mathbb C}$ using the trivial action of 
$\Gamma$ on $\mathbb C$ and the Galois action of $\Gamma$ on $Y$.

\begin{lemma}\label{lemn1}
The holomorphic vector bundle ${\mathcal E}_r$ admits a $\Gamma$--invariant holomorphic 
connection $D_\Gamma$ such that the holomorphic connection on $\det {\mathcal E}_r\,=\, 
{\mathcal O}_Y$ induced by $D_\Gamma$ coincides with the de Rham differential $d$.
\end{lemma}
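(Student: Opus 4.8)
The plan is to use the correspondence between $\Gamma$--equivariant bundles on $Y$ and parabolic bundles on $X$ in the opposite direction from how it was invoked before: a $\Gamma$--invariant logarithmic (indeed, by $\Gamma$--invariance, holomorphic) connection on ${\mathcal E}_r$ descends to a connection on the parabolic bundle ${\mathbb E}_r$, and conversely. So I would begin by recalling that ${\mathbb E}_r\,=\,\text{Sym}^{r-1}(E_*)$ admits a connection by Corollary \ref{cor3}, and that, via the correspondence of \cite{Bi}, a connection on the parabolic bundle ${\mathbb E}_r$ pulls back (or, more precisely, corresponds) to a $\Gamma$--invariant holomorphic connection on ${\mathcal E}_r$. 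The key point here is that a \emph{parabolic} connection on ${\mathbb E}_r$ (a logarithmic connection on the underlying bundle whose residues act by the parabolic weights) corresponds under \cite{Bi}, \cite{Bo1}, \cite{Bo2} to an honest $\Gamma$--invariant \emph{holomorphic} connection on ${\mathcal E}_r$ upstairs, because the ramification of $\rho$ exactly cancels the logarithmic poles: the residues prescribed by the parabolic weights are precisely what is needed for the pulled-back logarithmic connection to extend holomorphically over $\rho^{-1}(S)$. This gives the existence of \emph{some} $\Gamma$--invariant holomorphic connection $D_\Gamma$ on ${\mathcal E}_r$.

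Next I would address the determinant condition. A connection $D_\Gamma$ on ${\mathcal E}_r$ induces a holomorphic connection on $\det{\mathcal E}_r$, which we have identified as the trivial line bundle ${\mathcal O}_Y$ with trivial $\Gamma$--action. Any holomorphic connection on ${\mathcal O}_Y$ differs from the de Rham differential $d$ by a global holomorphic $1$--form $\omega\,\in\,H^0(Y,\,K_Y)$, and $\Gamma$--invariance of $D_\Gamma$ forces $\omega$ to be $\Gamma$--invariant, i.e. $\omega\,=\,\rho^*\beta$ for some $\beta\,\in\,H^0(X,\,K_X)$ (using that $\rho$ is unramified away from a finite set, so $H^0(Y,\,K_Y)^\Gamma\,\cong\,H^0(X,\,K_X)$). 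Then I replace $D_\Gamma$ by $D_\Gamma\,-\,\tfrac{1}{r}\rho^*\beta\otimes\text{Id}_{{\mathcal E}_r}$, which is again a $\Gamma$--invariant holomorphic connection on ${\mathcal E}_r$ (the scalar $1$--form preserves $\Gamma$--invariance and holomorphicity), and whose induced connection on $\det{\mathcal E}_r$ is $d\,+\,\omega\,-\,\omega\,=\,d$. This uses that $r$ is odd only insofar as $r$ is nonzero; the real role of the oddness and integrality of the $c_i$ is to guarantee the existence of the equivariant bundle ${\mathcal E}_r$ in the first place, as explained in the paragraph preceding the lemma.

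The main obstacle, and the step requiring the most care, is making precise the correspondence "connection on a parabolic bundle $\leftrightarrow$ $\Gamma$--invariant holomorphic connection on the associated equivariant bundle," including the matching of the parabolic weights with the residues of the pulled-back logarithmic connection and the verification that the pullback is holomorphic (not merely logarithmic) on $Y$. I expect to handle this by a local computation near a point $y\,\in\,\rho^{-1}(x_i)$: in a local coordinate $w$ on $Y$ with $\rho$ given by $z\,=\,w^{c_i}$, the equivariant bundle ${\mathcal E}_r$ is built from the parabolic data by the standard construction of \cite{Bi}, and a logarithmic connection on ${\mathbb E}_r$ with residue eigenvalue $a/c_i$ along a flag piece pulls back to $c_i\tfrac{dw}{w}\cdot(a/c_i)\,+\,(\text{holomorphic})\,=\,a\tfrac{dw}{w}\,+\,\cdots$, and after the gauge change built into the construction of ${\mathcal E}_r$ this becomes holomorphic in $w$. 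Invoking \cite{Bi}, \cite{Bo1}, \cite{Bo2} for the global statement, the remaining content is precisely this residue bookkeeping together with the elementary determinant adjustment above.
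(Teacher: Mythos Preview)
Your proposal is correct and follows essentially the same approach as the paper: invoke the correspondence between parabolic connections on ${\mathbb E}_r$ and $\Gamma$--invariant holomorphic connections on ${\mathcal E}_r$ (the paper cites \cite{Bi2} for this), use Corollary \ref{cor3} to produce such a connection, and then subtract $\tfrac{1}{r}\omega\cdot\text{Id}$ to fix the determinant. The paper is terser --- it does not spell out the local residue computation or the fact that $\omega$ descends to $X$, writing simply $D_\Gamma := D' - \tfrac{1}{r}\omega$ --- but your additional detail is harmless and the argument is the same.
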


\begin{proof}
Since the parabolic bundle ${\mathbb E}_r$ corresponds to the $\Gamma$--equivariant bundle
${\mathcal E}_r$, there is a natural bijection between the connections on ${\mathbb E}_r$
and the $\Gamma$--invariant holomorphic connections on ${\mathcal E}_r$ (see \cite{Bi2}).
Now, Corollary \ref{cor3} says that ${\mathbb E}_r$ admits a connection. Hence ${\mathcal E}_r$
admits a $\Gamma$--invariant holomorphic connection. Let $D'$ be a 
$\Gamma$--invariant holomorphic connection on ${\mathcal E}_r$. Note that any holomorphic
connection on the trivial bundle ${\mathcal O}_Y$ is of the form $d+\omega$, where $\omega$
is a holomorphic $1$-form on $Y$. Let
$d+\omega$ be the holomorphic connection on $\det {\mathcal E}_r\,=\, {\mathcal O}_Y$ induced by 
$D'$. Now it is straight-forward to check that
$$
D_\Gamma\, :=\, D' -\frac{1}{r}\omega
$$
is a $\Gamma$--invariant holomorphic connection on ${\mathcal E}_r$ satisfying the condition that
the holomorphic connection on $\det {\mathcal E}_r\,=\, 
{\mathcal O}_Y$ induced by $D_\Gamma$ coincides with the de Rham differential $d$.
\end{proof}

\begin{proposition}\label{nprop}
Let $D$ be a holomorphic connection on ${\mathcal E}_r$ satisfying 
the condition that the holomorphic connection on $\det {\mathcal E}_r$ 
induced by $D$ coincides with the de Rham differential $d$. Then $D$ is
a ${\rm SL}_r$--oper on $Y$.
\end{proposition}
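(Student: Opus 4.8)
The plan is to verify the two transversality conditions defining an ${\rm SL}_r$--oper by reducing everything to the rank--two situation already analyzed in Section \ref{se4}. Recall that ${\mathcal E}_r$ is the $\Gamma$--equivariant bundle on $Y$ corresponding to ${\mathbb E}_r \,=\, {\rm Sym}^{r-1}(E_*)$, and a holomorphic connection $D$ on ${\mathcal E}_r$ with trivial induced connection on $\det {\mathcal E}_r$ need not itself be $\Gamma$--invariant; but the oper conditions are local, so $\Gamma$--invariance is irrelevant for this proposition. First I would observe that ${\mathcal E}_r$ carries a canonical complete flag of holomorphic subbundles
$$
0 \,=\, {\mathcal F}_0 \,\subset\, {\mathcal F}_1 \,\subset\, \cdots \,\subset\, {\mathcal F}_r \,=\, {\mathcal E}_r
$$
coming, away from $\rho^{-1}(S)$, from the symmetric powers of the line subbundle $K^{1/2}_X \otimes {\mathbb L} \,\subset\, E$ in \eqref{e4}: over $X \setminus S$ one has $E \,=\, V_0$ with its oper flag, and ${\rm Sym}^{r-1}$ of that flag pulls back under $\rho$ to a flag on ${\mathcal E}_r|_{Y \setminus \rho^{-1}(S)}$ whose unique extension to holomorphic subbundles of ${\mathcal E}_r$ defines the ${\mathcal F}_i$. (Uniqueness of the extension across the finitely many ramification points follows because a subsheaf of a vector bundle on a curve has a unique saturation.) This is the flag with respect to which the transversality will be checked.

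Next I would check the two conditions. For condition (1), $D({\mathcal F}_i) \,\subset\, {\mathcal F}_{i+1} \otimes K_Y$: this is a closed condition on $Y$, so it suffices to verify it over the open dense set $Y \setminus \rho^{-1}(S)$. There, $\rho$ is étale, so $\rho^*$ identifies the connection induced by $D$ (which over $X \setminus S$ is an honest holomorphic connection on ${\rm Sym}^{r-1}(E)$) with a connection on ${\rm Sym}^{r-1}(V_0)$; since $D$ restricted to $E_*$ over $X \setminus S$ satisfies $D(E_1) \subset E_2 \otimes K_X$ (this is precisely the content of Lemma \ref{lem3}, which identifies $E$ with $J^1(K^{-1/2}_X \otimes {\mathbb L})$ compatibly with the flag, together with Corollary \ref{cor1} giving nonvanishing of the second fundamental form off $S$), the Leibniz rule propagates this to ${\rm Sym}^{r-1}$: $D({\rm Sym}^{j}(E_1) \cdot {\rm Sym}^{i-j}(E)) \subset \cdots$, so $D({\mathcal F}_i) \subset {\mathcal F}_{i+1} \otimes K_X$ over $X \setminus S$, hence the same on $Y \setminus \rho^{-1}(S)$ after pullback, hence on all of $Y$. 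For condition (2), the second fundamental form ${\mathcal F}_i/{\mathcal F}_{i-1} \to ({\mathcal F}_{i+1}/{\mathcal F}_i) \otimes K_Y$ must be an isomorphism: again this may be checked over $Y \setminus \rho^{-1}(S)$, where it is the pullback of the corresponding map for ${\rm Sym}^{r-1}(V_0)$ on $X \setminus S$; that map is, up to a nonzero scalar at each stage, the $(i)$-th power of the second fundamental form of $V_0$ (the standard computation for opers built from $V_0$), which by Corollary \ref{cor1} is nowhere vanishing on $X \setminus S$. Since $\rho$ is étale there and $dz$ pulls back to a local generator of $K_Y$ (up to the ramification, which is trivial on $Y \setminus \rho^{-1}(S)$), the pulled-back map is nowhere vanishing on $Y \setminus \rho^{-1}(S)$, hence an isomorphism of line bundles there, hence — being a map of line bundles on the curve $Y$ of equal degree (as the flag subquotients of ${\mathcal E}_r$ have the expected degrees, the total tensored with $K_Y^{\,\binom{r}{2}}$ matching $\det {\mathcal E}_r$) — an isomorphism on all of $Y$. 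Condition (3), $\bigwedge^r D = d$ on $\det {\mathcal E}_r = {\mathcal O}_Y$, is exactly the hypothesis of the proposition.

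The main obstacle I anticipate is the behaviour at the ramification points $\rho^{-1}(S)$: one must be sure that the flag ${\mathcal F}_\bullet$ extends across them as a flag of honest subbundles (not merely subsheaves) and that the degree bookkeeping for the subquotients works out so that the two globally-defined morphisms of line bundles, shown to be isomorphisms generically, are isomorphisms everywhere. I would handle this by arguing entirely via the "closed condition / equal degree" device — never computing anything at a ramification point directly — using that the parabolic structure of ${\mathbb E}_r$ was arranged (via the $c_i \in {\mathbb N}$ and $r$ odd hypotheses entering through $\det{\mathbb E}_r$ trivial) precisely so that ${\mathcal E}_r$ and its determinant are as stated in the paragraph preceding Lemma \ref{lemn1}. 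The degrees of the ${\mathcal F}_i$ are then forced by the fact that on $Y \setminus \rho^{-1}(S)$ they agree with the symmetric-power flag and that ${\mathcal E}_r$ is locally free; the difference between the degree of $({\mathcal F}_{i+1}/{\mathcal F}_i) \otimes K_Y$ and that of ${\mathcal F}_i/{\mathcal F}_{i-1}$ is zero because summing the telescoping relation over $i$ and using $\det {\mathcal E}_r = {\mathcal O}_Y$ together with $\deg K_Y$ appearing $\binom{r}{2}$ times on each side gives a consistent identity. This is the only place where a genuine (though short) computation is needed.
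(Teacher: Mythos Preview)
Your argument has a genuine gap at its core: you treat the connection $D$ on ${\mathcal E}_r$ as though, over $Y\setminus\rho^{-1}(S)$, it were the pullback under $\rho$ of the $(r-1)$-st symmetric power of a parabolic connection on $E_*$. It is not. The proposition concerns an \emph{arbitrary} holomorphic connection on ${\mathcal E}_r$ with trivial induced connection on the determinant; you yourself note that $D$ need not be $\Gamma$-invariant, so it does not even descend to $X\setminus S$, let alone arise as ${\rm Sym}^{r-1}$ of a rank-two connection. Consequently the appeals to Lemma~\ref{lem3} and Corollary~\ref{cor1} are inapplicable --- those are statements about connections on $E_*$, and $D$ is not one. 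On the open set $Y\setminus\rho^{-1}(S)$ there is no local reason whatsoever for $D({\mathcal F}_i)\subset{\mathcal F}_{i+1}\otimes K_Y$ to hold, nor for the second fundamental form to be nowhere vanishing there; both facts are forced by \emph{global} constraints on the bundle ${\mathcal E}_r$, not by anything visible on a chart.

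The paper's proof avoids this trap by never leaving $Y$ and never invoking a rank-two connection. Via the parabolic--equivariant correspondence it identifies the successive quotients of the flag explicitly as $V_j/V_{j-1}\cong K_Y^{(r+1)/2-j}$ (the parabolic line bundle $\mathbb K$ with underlying bundle $K_X$ and weight $\tfrac{c_i-1}{c_i}$ at $x_i$ corresponds to $K_Y$). Condition~(1) then follows because the ${\mathcal O}_Y$-linear obstruction $V_j/V_{j-1}\to ({\mathcal E}_r/V_{j+1})\otimes K_Y$ is a map from $K_Y^{(r+1)/2-j}$ into a bundle whose subquotients are all strictly lower powers of $K_Y$; since $g_Y>1$ there are no nonzero such maps, and induction on $j$ gives $D(V_j)\subset V_{j+1}\otimes K_Y$. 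For condition~(2), if some $\phi_j$ vanished then $V_j$ would be $D$-invariant, forcing $\deg V_j=0$; but $\deg V_j=j(r-j)(g_Y-1)>0$, a contradiction, so each $\phi_j$ is a nonzero map between isomorphic line bundles and hence an isomorphism. Your flag construction (by saturation) and your ``equal degree'' observation are both fine and are essentially what the paper uses; what is missing is precisely the degree argument on $Y$ that replaces your attempted reduction to the rank-two theory on $X$.
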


\begin{proof}
The filtration of $E_*$ in \eqref{e4} produces a filtration of subbundles of
the parabolic symmetric product
\begin{equation}\label{efj}
0\,=\, F_0\, \subset\, F_1\, \subset\, F_2\, \subset\, \cdots\,\subset\,
F_{r-1}\, \subset\, F_r\,=\, {\mathbb E}_{r,0}\, ,
\end{equation}
where ${\mathbb E}_{r,0}$ is the holomorphic vector bundle underlying the parabolic
bundle ${\mathbb E}_r$. Each $F_j$ is a holomorphic subbundle of ${\mathbb E}_{r,0}$
of rank $j$. We shall describe the quotient line bundles $F_j/F_{j-1}$ equipped with the
parabolic structure induced by the parabolic structure of ${\mathbb E}_r$.

Let $\mathbb K$ be the parabolic line bundle whose underlying holomorphic line bundle is
the canonical bundle $K_X$, and the parabolic weight of $\mathbb K$ at any $x_i\, \in\,
S$ is $\frac{c_i-1}{c_i}$. If $K'$ is the line subbundle $K^{1/2}_X\otimes {\mathbb L}$
in \eqref{e4} equipped with the parabolic structure induced by $E_*$ (this simply means
that the parabolic weight of $K'$ at any $x_i\, \in\, S$ is $\frac{2c_i-1}{2c_i}$), then
$\mathbb K$ is identified with the parabolic tensor product $K'\otimes K'$.

For any $1\, \leq\, j\, \leq\, r$, the quotient $F_j/F_{j-1}$ in \eqref{efj} equipped with the
parabolic structure induced by the parabolic structure of ${\mathbb E}_r$ is identified with
the parabolic tensor product (and dual in case the exponent is negative)
${\mathbb K}^{(r+1)/2 -j}$ (recall that $r$ is an odd integer).

Since the parabolic weight of $\mathbb K$ at any $x_i\, \in\, S$ is an integral multiple of 
$\frac{1}{c_i}$, there is a unique $\Gamma$--equivariant line bundle on $Y$ that corresponds 
to the parabolic line  bundle $\mathbb K$. It is straight-forward to check that this
$\Gamma$--equivariant line bundle on $Y$ is the canonical bundle $K_Y$ equipped with the
action of $\Gamma$ induced by the action of $\Gamma$ on $Y$.

It was noted above that the quotient $F_j/F_{j-1}$ in \eqref{efj} equipped with the induced 
parabolic structure is ${\mathbb K}^{(r+1)/2 -j}$. Since the correspondence between parabolic 
bundles and equivariant bundles is compatible with the tensor product and dualization 
operations, the parabolic line bundle $F_j/F_{j-1}$ corresponds to the $\Gamma$--equivariant 
line bundle $K^{(r+1)/2 -j}_Y$, because $\mathbb K$ corresponds to $K_Y$.

Consequently, from \eqref{efj} we conclude that that $\Gamma$--equivariant bundle ${\mathcal E}_r$
has a filtration of holomorphic subbundles
\begin{equation}\label{efj2}
0\,=\, V_0\, \subset\, V_1\, \subset\, V_2\, \subset\, \cdots\,\subset\,
V_{r-1}\, \subset\, V_r\,=\, {\mathcal E}_r
\end{equation}
such that for all $1\, \leq\, j\, \leq\, r$,
\begin{itemize}
\item $\text{rank}(V_j)\,=\, j$,

\item $V_j/V_{j-1}\,=\, K^{(r+1)/2 -j}_Y$, and

\item the action of $\Gamma$ on ${\mathcal E}_r$ preserves the subbundle $V_j$.
\end{itemize}

Let $D$ be a holomorphic connection on ${\mathcal E}_r$. From \eqref{efj2} it follows that
$D(V_1)\, \subset\, V_2\otimes K_Y$, and more generally, using induction,
$$
D(V_j)\, \subset\, V_{j+1}\otimes K_Y
$$
for all all $1\, \leq\, j\, \leq\, r-1$. Therefore, $D$ produces an ${\mathcal O}_Y$--linear
homomorphism
\begin{equation}\label{phij}
\phi_j\, :\, V_j/V_{j-1}\, \longrightarrow\, (V_{j+1}/V_j)\otimes K_Y
\end{equation}
for all $1\, \leq\, j\, \leq\, r-1$. Note that both $V_j/V_{j-1}$ and $(V_{j+1}/V_j)\otimes K_Y$
are identified with $K^{(r+1)/2 -j}_Y$, because $V_i/V_{i-1}\,=\, K^{(r+1)/2 -i}_Y$ for all
$1\, \leq\, i\, \leq\, r$. This implies that $\phi_j$ in \eqref{phij} is either an 
isomorphism, or   identically zero.

Assume that $\phi_j\, =\, 0$ for some $1\, \leq\, j\, \leq\, r-1$. Then the connection $D$
on ${\mathcal E}_r$ preserves the subbundle $V_j$. This implies the
\begin{equation}\label{dv}
\text{degree}(V_j)\, =\, 0\, .
\end{equation}
Since $V_i/V_{i-1}\,=\, K^{(r+1)/2 -i}_Y$ for all $1\, \leq\, i\, \leq\, r$, we have
\begin{equation}\label{dv2}
\text{degree}(V_j)\, =\, \, j\frac{r-j}{2}\text{degree}(K_Y)\,=\, j(r-j)(g_Y-1)\, ,
\end{equation}
where $g_Y$ is the genus of $Y$. Since $\text{genus}(X)\, \geq\, 1$, and $S\, \not=\,\emptyset$,
it follows that $g_Y\, >\, 1$. Hence \eqref{dv2} contradicts \eqref{dv}.

In view of the above contradiction we conclude that 
$\phi_j\, \not=\, 0$ for all $1\, \leq\, j\, \leq\, r-1$. As noted before, this implies
that $\phi_j$ is an isomorphism for all $1\, \leq\, j\, \leq\, r-1$.

Therefore, the holomorphic connection $D$ on ${\mathcal E}_r$ defines a $\text{SL}_r$--oper if 
the holomorphic connection on $\det {\mathcal E}_r$ induced by $D$ coincides with the de Rham 
differential $d$ on ${\mathcal O}_Y$. This completes the proof of the proposition.
\end{proof}

The action of $\Gamma$ on
${\mathcal E}_r$ produces an action of $\Gamma$ on the
space of all holomorphic connections on ${\mathcal E}_r$. This action
of $\Gamma$ on the space of all holomorphic connections on ${\mathcal E}_r$
evidently preserves the space ${\mathcal D}({\mathcal E})$ consisting
of all holomorphic connections on ${\mathcal E}_r$
that induce the trivial connection on $\det {\mathcal E}_r\,=\,
{\mathcal O}_Y$.

Since the ${\rm SL}_r$--opers on $Y$ are the equivalence classes of holomorphic connections on 
$\mathcal E$ lying in ${\mathcal D}({\mathcal E})$, we obtain an action of $\Gamma$ on the space 
of all ${\rm SL}_r$--opers on $Y$. A ${\rm SL}_r$--oper on $Y$ is called 
$\Gamma$--\textit{invariant} if it is fixed by this action of $\Gamma$ on $\mathcal E$.

\begin{theorem}\label{thm2}
Parabolic ${\rm SL}_r$--opers on $X$ are in a natural bijection with the
$\Gamma$--invariant ${\rm SL}_r$--opers on $Y$.
\end{theorem}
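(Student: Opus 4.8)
The plan is to construct the bijection directly by unpacking both sides in terms of the $\Gamma$--equivariant bundle ${\mathcal E}_r$ on $Y$. The starting point is the correspondence of Biswas \cite{Bi}, \cite{Bi2}: since the parabolic weights of ${\mathbb E}_r$ at $x_i$ are integral multiples of $\frac{1}{c_i}$, the parabolic bundle ${\mathbb E}_r$ corresponds to the $\Gamma$--equivariant bundle ${\mathcal E}_r$, and this correspondence induces a bijection between connections on the parabolic bundle ${\mathbb E}_r$ and $\Gamma$--invariant holomorphic connections on ${\mathcal E}_r$. Moreover, this bijection is compatible with the operations of tensor product, exterior power, and dualization, so the connection induced on $\det {\mathbb E}_r$ by a connection $D$ on ${\mathbb E}_r$ corresponds to the connection induced on $\det {\mathcal E}_r$ by the corresponding $\Gamma$--invariant connection. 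Since $\det {\mathbb E}_r$ is the trivial parabolic line bundle and $\det {\mathcal E}_r \,=\, {\mathcal O}_Y$ with the trivial $\Gamma$--action, the condition that the induced connection on $\det {\mathbb E}_r$ be the de Rham differential matches exactly the condition that the induced connection on $\det {\mathcal E}_r$ be the de Rham differential $d$ on ${\mathcal O}_Y$.

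Next I would match up the equivalence relations on the two sides. A parabolic ${\rm SL}_r$--oper on $X$ is an equivalence class of connections on ${\mathbb E}_r$ under holomorphic automorphisms of the parabolic bundle ${\mathbb E}_r$; a $\Gamma$--invariant ${\rm SL}_r$--oper on $Y$ is a $\Gamma$--orbit of an equivalence class of connections in ${\mathcal D}({\mathcal E})$ under holomorphic automorphisms of ${\mathcal E}_r$. Here the key observation is that holomorphic automorphisms of the parabolic bundle ${\mathbb E}_r$ correspond precisely to $\Gamma$--equivariant holomorphic automorphisms of ${\mathcal E}_r$ under the same correspondence, so conjugation by an automorphism on one side translates to conjugation by the corresponding automorphism on the other. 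Thus the bijection between connections on ${\mathbb E}_r$ preserving the determinant condition and $\Gamma$--invariant connections in ${\mathcal D}({\mathcal E})$ descends to a bijection between the respective equivalence classes.

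The remaining point — and the one requiring the most care — is to check that every $\Gamma$--invariant connection in ${\mathcal D}({\mathcal E})$ is automatically an ${\rm SL}_r$--oper on $Y$ in the sense of Definition \ref{def2} (applied on $Y$), i.e. that the transversality and isomorphism conditions on the flag are automatic, so that ``${\rm SL}_r$--oper on $Y$'' can be identified with ``connection in ${\mathcal D}({\mathcal E})$'' up to equivalence. This is exactly what Proposition \ref{nprop} provides: the flag \eqref{efj2} of ${\mathcal E}_r$ has successive quotients $K^{(r+1)/2-j}_Y$, and any holomorphic connection with trivial induced connection on the determinant is forced by the degree computation \eqref{dv}–\eqref{dv2} (which uses $g_Y \,>\, 1$) to have all second fundamental forms $\phi_j$ nonzero, hence isomorphisms. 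Therefore the $\Gamma$--orbit of such a connection is automatically a $\Gamma$--invariant ${\rm SL}_r$--oper, and conversely. Assembling these three ingredients — the connection correspondence of \cite{Bi2} with its compatibility with determinants and automorphisms, the translation of the equivalence relations, and Proposition \ref{nprop} to identify ${\mathcal D}({\mathcal E})$ with ${\rm SL}_r$--opers on $Y$ — yields the claimed natural bijection. The main obstacle I anticipate is being precise about the naturality and $\Gamma$--equivariance at each stage: one must verify that the correspondence $D \,\mapsto\, D_\Gamma$ intertwines the (trivial, in the parabolic picture) action on one side with the $\Gamma$--action on connections on ${\mathcal E}_r$, so that ``$\Gamma$--invariant on $Y$'' is the correct counterpart of ``a connection on the parabolic bundle over $X$'' with no residual choices.
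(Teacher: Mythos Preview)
Your proposal is correct and follows essentially the same approach as the paper's own proof, which simply invokes the Biswas correspondence between parabolic connections on ${\mathbb E}_r$ and $\Gamma$--invariant holomorphic connections on ${\mathcal E}_r$ together with the matching of the determinant conditions; in fact you supply more detail than the paper does, since the paper leaves the equivalence--relation compatibility and the appeal to Proposition~\ref{nprop} implicit. One small slip to fix: a $\Gamma$--invariant ${\rm SL}_r$--oper on $Y$ is a $\Gamma$--\emph{fixed} equivalence class, not a $\Gamma$--orbit.
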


\begin{proof}
As noted before,
since the $\Gamma$--equivariant bundle ${\mathcal E}_r$ corresponds to
the parabolic bundle $\text{Sym}^{r-1}(E_*)\,=\, {\mathbb E}_r$, the parabolic
connections on ${\mathbb E}_r$ are in a natural bijective correspondence with the
$\Gamma$--invariant holomorphic connections on ${\mathcal E}_r$.
We recall that the ${\rm SL}_r$--opers on $Y$ are the
equivalence classes of holomorphic connections
$D'$ on ${\mathcal E}_r$ such that the connection on 
$\bigwedge^r{\mathcal E}_r\, =\, {\mathcal O}_Y$ induced by $D'$ coincides with the
one given by the de Rham differential $d$. For such a $\Gamma$--invariant holomorphic connection
$D'$ on ${\mathcal E}_r$, the
connection $D$ on the parabolic bundle ${\mathbb E}_r$ has the property that the connection
of the parabolic line bundle $\bigwedge^r {\mathbb E}_r$ induced by $D$ is the trivial
connection on ${\mathcal O}_X$ (the connection on ${\mathcal O}_X$ given
by the de Rham differential $d$). The theorem follows from these.
\end{proof}

A projective structure $P_Y$ on $Y$ is called $\Gamma$--invariant if the
automorphisms of $Y$ given by the action of $\Gamma$ on $Y$ preserve
$P_Y$. Let ${\mathcal P}(Y)^\Gamma$ denote the space of all projective structures on $Y$
that are $\Gamma$--invariant. Any element of ${\mathcal P}(Y)^\Gamma$ defines a logarithmic
projective structure on $X$ (see \cite{BDM}).
This ${\mathcal P}(Y)^\Gamma$ is an affine space for
$$H^0(Y,\, K^{\otimes 2}_Y)^\Gamma\,=\, H^0(X,\, K^{\otimes 2}_X\otimes {\mathcal O}_X(S))\, .$$

The space of all ${\rm SL}_r$--opers on $Y$ is in bijection with
$$
{\mathcal P}(Y)\times (\bigoplus_{i=3}^r H^0(Y,\, K^{\otimes i}_Y))
$$
(see \cite[Theorem 4.9]{Bi3} and \cite[Eq.~(5.4)]{Bi3}). Consequently, the space of all
$\Gamma$--invariant ${\rm SL}_r$--opers on $Y$ is in
bijection with
$${\mathcal P}(Y)^\Gamma\times (\bigoplus_{i=3}^r H^0(Y,\, K^{\otimes i}_Y)^\Gamma)
\,=\,{\mathcal P}(Y)^\Gamma\times (\bigoplus_{i=3}^r 
H^0(X,\, K^{\otimes i}_X\otimes {\mathcal O}_X((i-1)S)))\, .$$
Now from Theorem \ref{thm2} we conclude that the
space of all parabolic ${\rm SL}_r$--opers on $X$ is in bijection with
$$
{\mathcal P}(Y)^\Gamma\times (\bigoplus_{i=3}^r 
H^0(X,\, K^{\otimes i}_X\otimes {\mathcal O}_X((i-1)S)))\, .$$

Setting $r\,=\,2$ we get that the parabolic ${\rm SL}_2$--opers on $X$ are
identified with ${\mathcal P}(Y)^\Gamma$. Consequently, we have the following:

\begin{theorem}\label{propl}
The space of all parabolic ${\rm SL}_r$--opers on $X$ is in a natural bijection with
$$
{\mathcal P}_2\times (\bigoplus_{i=3}^r 
H^0(X,\, K^{\otimes i}_X\otimes {\mathcal O}_X((i-1)S)))\, ,$$
where ${\mathcal P}_2$ denotes the space of all parabolic ${\rm SL}_2$--opers on $X$.
\end{theorem}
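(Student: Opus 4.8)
The plan is to combine Theorem \ref{thm2} with the known parametrization of ${\rm SL}_r$--opers on a fixed curve, together with the $\Gamma$--invariance bookkeeping already carried out in the paragraphs preceding the statement. Concretely, Theorem \ref{thm2} gives a natural bijection between parabolic ${\rm SL}_r$--opers on $X$ and $\Gamma$--invariant ${\rm SL}_r$--opers on $Y$, so it suffices to identify the latter set. For this I would invoke \cite[Theorem 4.9]{Bi3} and \cite[Eq.~(5.4)]{Bi3}, which identify the space of all ${\rm SL}_r$--opers on $Y$ with ${\mathcal P}(Y)\times\big(\bigoplus_{i=3}^r H^0(Y,\,K^{\otimes i}_Y)\big)$, where ${\mathcal P}(Y)$ is the space of projective structures on $Y$. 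The point is that this identification is canonical and hence $\Gamma$--equivariant, so passing to $\Gamma$--fixed points on both sides yields that the $\Gamma$--invariant ${\rm SL}_r$--opers on $Y$ are identified with ${\mathcal P}(Y)^\Gamma\times\big(\bigoplus_{i=3}^r H^0(Y,\,K^{\otimes i}_Y)^\Gamma\big)$.

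Next I would translate the $\Gamma$--invariant data on $Y$ back to data on $X$. Since $\rho:Y\to X$ is the ramified Galois cover with ramification index $c_i$ over each $x_i$, pushing forward $\Gamma$--invariant sections gives $H^0(Y,\,K^{\otimes i}_Y)^\Gamma\,=\,H^0(X,\,K^{\otimes i}_X\otimes{\mathcal O}_X((i-1)S))$: locally at a ramification point a coordinate $w$ on $Y$ satisfies $w^{c}=z$, so $dw=\tfrac{1}{c}z^{(1/c)-1}dz$ and an invariant section of $K_Y^{\otimes i}$ picks up a pole of order $i-1$ in the $z$ variable, which is exactly the twist by ${\mathcal O}_X((i-1)S)$ — this is the same computation already used in the paper for the case $i=2$, where ${\mathcal P}(Y)^\Gamma$ is recorded to be an affine space for $H^0(Y,K_Y^{\otimes 2})^\Gamma=H^0(X,K_X^{\otimes 2}\otimes{\mathcal O}_X(S))$. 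Assembling these identifications gives that the parabolic ${\rm SL}_r$--opers on $X$ are in natural bijection with ${\mathcal P}(Y)^\Gamma\times\big(\bigoplus_{i=3}^r H^0(X,\,K^{\otimes i}_X\otimes{\mathcal O}_X((i-1)S))\big)$.

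Finally I would specialize Theorem \ref{thm2} itself to $r=2$: in that case $\det{\mathbb E}_2=\det E_*$ is trivial, $\text{Sym}^{1}(E_*)=E_*$, and the notion of parabolic ${\rm SL}_2$--oper of Definition \ref{def1} agrees with that of Definition \ref{def2}, so Theorem \ref{thm2} gives a natural bijection between ${\mathcal P}_2$ (the space of parabolic ${\rm SL}_2$--opers on $X$) and the $\Gamma$--invariant ${\rm SL}_2$--opers on $Y$, which the $r=2$ case of the preceding identification shows is ${\mathcal P}(Y)^\Gamma$. Substituting ${\mathcal P}(Y)^\Gamma={\mathcal P}_2$ into the displayed description then yields the theorem.

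The routine inputs are the local ramification computation and the observation that all the bijections involved are canonical (hence commute with the $\Gamma$--action and with taking fixed points); the one place requiring genuine care is checking that the parametrization of \cite[Theorem 4.9]{Bi3} is indeed $\Gamma$--equivariant when $\Gamma$ acts on $Y$ by automorphisms, i.e.\ that the splitting of the ${\rm SL}_r$--oper into a projective structure plus higher-order coefficients is natural under pullback by automorphisms. Since that splitting is built functorially out of the jet-bundle and symbol constructions recalled in Section \ref{se2}, equivariance is automatic, and this is the only point that is not purely formal. I expect the main (mild) obstacle to be organizing this equivariance statement cleanly rather than any substantive difficulty.
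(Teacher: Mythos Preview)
Your proposal is correct and follows essentially the same route as the paper: apply Theorem~\ref{thm2}, invoke the parametrization from \cite[Theorem 4.9 and Eq.~(5.4)]{Bi3}, pass to $\Gamma$--invariants, rewrite $H^0(Y,K_Y^{\otimes i})^\Gamma$ as $H^0(X,K_X^{\otimes i}\otimes{\mathcal O}_X((i-1)S))$, and then specialize to $r=2$ to identify ${\mathcal P}(Y)^\Gamma$ with ${\mathcal P}_2$. You have in fact supplied more detail than the paper does on the equivariance of the \cite{Bi3} parametrization and on the local ramification computation.
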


\section{Further comments}

There are other natural generalizations of ${\rm SL}_r$--opers 
to the parabolic set-up (see also \cite{ABF}). Following the original definition
of \cite{BD} one can also define a parabolic ${\rm SL}_r$--oper
for a given parabolic divisor $S = \sum_i x_i$, with 
$x_i \in X$, and with real
parabolic weights $\alpha_{i,j}$ such that
$$ 0 \leq \alpha_{i,1} < \cdots < \alpha_{i,j} < \cdots <
\alpha_{i,r} < 1 \ \ \text{for each} \ \ x_i \in S, $$
and $m_i = \sum_j \alpha_{i,j} \in \mathbb{N}^*$, as a triple
$(E,E_\bullet, D)$, where
\begin{itemize}
\item $E$ is a rank $r$ vector bundle,

\item $E_\bullet \ : \ 0 = E_0 \subset E_1 \subset \cdots
\subset E_r = E$ is a filtration by subbundles such that $\mathrm{rk}(E_i) = i$,

\item $D : E \rightarrow E \otimes K_X \otimes {\mathcal O}_X(S)$ is a logarithmic connection on $E$ such that
$D(E_i) \subset E_{i+1} \otimes K_X \otimes 
{\mathcal O}_X(S)$ and the induced ${\mathcal O}_X$-linear
map $\overline{D} : E_i/E_{i-1} \rightarrow E_{i+1}/E_{i}
\otimes K_X \otimes {\mathcal O}_X(S)$ factorizes through
an isomorphism $E_i/E_{i-1} \cong E_{i+1}/E_{i}
\otimes K_X$ followed by the natural sheaf inclusion
$E_{i+1}/E_{i} \otimes K_X \subset E_{i+1}/E_{i}
\otimes K_X \otimes {\mathcal O}_X(S)$.

\item For each $x_i \in S$ the residue map 
$\mathrm{Res}(D,x_i)$ preserves the full flag $(E_\bullet)_{x_i}$ and acts as multiplication by 
$\alpha_{i,r+1-j}$ on $(E_j/E_{j-1})_{x_i}$

\item $\mathrm{det} E = {\mathcal O}_X( - \sum_i m_i x_i)$
and $\mathrm{det} D$ is the logarithmic de Rham differential
$f \mapsto df$ restricted to ${\mathcal O}_X( - \sum_i m_i x_i)$.
\end{itemize}

It can be easily shown that for $r=2$ and rational parabolic weights the two definitions coincide 
(up to a choice of a theta-characteristic and a square-root of ${\mathcal O}_X( - S)$). For $r \,>\, 
2$ we note that the above definition is more general than Definition \ref{def2}. It would be 
interesting to know whether they coincide for the special rational parabolic weights of the ${\rm 
SL}_r$--opers in Section \ref{se6} and whether Theorem \ref{propl} holds for general real weights. We 
will address these questions in a future paper.

\section*{Acknowledgements}

This work has been supported by the French government through the UCAJEDI Investments in the Future 
project managed by the National Research Agency (ANR) with the reference number ANR2152IDEX201. The 
first author is partially supported by a J. C. Bose Fellowship, and school of mathematics, TIFR, is 
supported by 12-R$\&$D-TFR-5.01-0500\,.


\end{document}